\newtheorem{theorem}{Theorem}[section]
\newtheorem{prop}[theorem]{Proposition}
\newtheorem{lemma}[theorem]{Lemma}
\newtheorem{conj}{Conjecture}
\newtheorem{claim}[theorem]{Claim}
\newtheorem{fact}[theorem]{Fact}
\newtheorem{obs}[theorem]{Observation}
\theoremstyle{definition}
\newtheorem{defn}[theorem]{Definition}
\newtheorem*{defn-non}{Definition}
\definecolor{rosepink}{RGB}{255,102,204}
\definecolor{dateplum}{HTML}{993366}
\definecolor{darkdateplum}{RGB}{128,0,32}
\definecolor{lightdateplum}{RGB}{219,112,147}
\definecolor{darkred}{RGB}{139,0,0}
\definecolor{lightred}{RGB}{240,130,100}
\newlist{Case}{enumerate}{2}
\setlist[Case, 1]{%
    label           =   {\bfseries Case \arabic*.},
    labelindent=1em ,labelwidth=1.3cm, labelsep*=1em, leftmargin =!
}
\setlist[Case, 2]{%
    label           =   {\bfseries Subcase \arabic{Casei}.\arabic*.},
    labelindent=-1em ,labelwidth=1.3cm, labelsep*=1em, leftmargin =!
}
\newcommand{\Yemph}[1]{\textcolor{black}{\emph{#1}}}
\newenvironment{poc}{\begin{proof}[Proof of claim]}{\end{proof}}
\newcommand{\C}[1]{{\protect\mathcal{#1}}}
\newcommand{\ceil}[1]{\lceil #1\rceil}
\newcommand{\floor}[1]{\lfloor #1\rfloor}
\newcommand{\MIS}{\mathcal{I}_{\textup{max}}(G)}
\title{Sublinear hitting sets for some geometric graphs} 
\author{Xinbu Cheng\thanks{Laboratory of Mathematics and Complex Systems, Ministry of Education, School of Mathematical Sciences, Beijing Normal University, Beijing, China. Emails: chengxinbu2006@sina.com.}
\and
Xinqi Huang\thanks{School of Mathematical Sciences, University of Science and Technology of China, Hefei, Anhui 230026,
China.
Emails:
\{huangxq,rong\_ming\_yuan\}@mail.ustc.edu.cn
}
\and
Mingyuan Rong\footnotemark[2]
\and
Zixiang Xu\thanks{Corresponding author. Extremal Combinatorics and Probability Group (ECOPRO), Institute for Basic Science (IBS), Daejeon, South Korea. Emails: zixiangxu@ibs.re.kr. Supported by IBS-R029-C4.}
}
\begin{document}
\date{}
\maketitle
\begin{abstract}
For an $n$-vertex graph $G$, let $h(G)$ denote the smallest size of a subset of $V(G)$ such that it intersects every maximum independent set of $G$. A conjecture posed by Bollob\'{a}s, Erd\H{o}s and Tuza in early 90s remains widely open, asserting that for any $n$-vertex graph $G$, if the independence number $\alpha(G) =\Omega(n) $, then $h(G) = o(n)$. In this paper, we establish the validity of this conjecture for various classes of graphs, Our main contributions include:
\begin{enumerate}
  \item We provide a novel unified framework to find sub-linear hitting sets for graphs with certain locally sparse properties. Based on this framework, we can find hitting sets of size at most $O(\frac{n}{\log{n}})$ in any $n$-vertex even-hole-free graph (in particular, chordal graph) and in any $n$-vertex disk graph, with linear independence numbers. 
  \item Utilizing geometric observations and combinatorial arguments, we show that any $n$-vertex circle graph $G$ with linear independence number satisfies $h(G)\le O(\sqrt{n})$. Moreover, we extend this methodology to more general classes of graphs.
   \item We show the conjecture holds for those hereditary graphs having sublinear balanced separators.
\end{enumerate}
 We also show that $h(G)$ can be upper bounded by constants for several sporadic families of graphs with large independence numbers.

\medskip
\noindent {{\it Key words and phrases\/}: Bollob\'{a}s-Erd\H{o}s-Tuza conjecture, hitting set, geometric graphs}

\smallskip

\noindent {{\it AMS subject classifications\/}: 05C69}


\end{abstract}

\section{Introduction}
\subsection{Overview}
For a graph $G=(V,E)$, an \Yemph{independent set} is a set of vertices in $G$, no two of which are adjacent. A \Yemph{maximum independent set} is an independent set of the largest possible size for the graph $G$, and this size is called the \Yemph{independence number} of $G$ and is usually denoted by $\alpha(G)$. We say $T\subseteq V(G)$ is a \Yemph{hitting set} of the graph $G$ if $T\cap I\neq\emptyset$ holds for every maximum independent set $I\subseteq V(G)$. Let $h(G)$ be the smallest size of a hitting set $T\subseteq V(G)$. Assume that $n$ is sufficiently large whenever this is needed. Bollob\'{a}s, Erd\H{o}s and Tuza~(see,~\cite{1999BookErdos,1991ErdosCollection}) raised the following conjecture.

\begin{conj}[\cite{1999BookErdos,1991ErdosCollection}]\label{conj:BETConj}
    Let $G$ be an $n$-vertex graph with $\alpha(G)=\Omega(n)$, then $h(G)=o(n)$.
\end{conj}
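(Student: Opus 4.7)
The plan is to attack the conjecture by combining a probabilistic/greedy argument with a structural extraction step. Let $\alpha=\alpha(G)=\Omega(n)$ and let $\mathcal{F}=\mathcal{I}_{\max}(G)$ denote the family of maximum independent sets. A double counting yields a vertex $v$ contained in at least an $\alpha/n$ fraction of $\mathcal{F}$; adding this $v$ to the hitting set and iterating on the subfamily of MIS avoiding $v$ gives
\[
h(G) \;\le\; \frac{n}{\alpha}\log|\mathcal{F}| \;+\; 1,
\]
and the same estimate follows from sampling each vertex independently with probability $p=\Theta(\log|\mathcal{F}|/\alpha)$ and a union bound. Hence the conjecture is immediate whenever $\log|\mathcal{F}|=o(\alpha)$, which already covers all graphs with subexponentially many maximum independent sets.

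The substantive regime is therefore $|\mathcal{F}|$ exponentially large in $\alpha$, where one must exploit additional structure of $G$. My plan is to set up a dichotomy: either a small set $S\subseteq V(G)$ appears in a dense subfamily of $\mathcal{F}$---in which case one places $S$ in the hitting set and recurses on the complementary subfamily---or $\mathcal{F}$ is genuinely spread across $V(G)$, and one argues that $G$ itself carries combinatorial structure (bounded degeneracy in some neighbourhood, a sublinear balanced separator, a geometric representation, or a forbidden induced subgraph) which can be leveraged to build a sublinear hitting set directly. The three contributions announced in the abstract fit exactly this template: local sparsity conditions constrain the variability of MIS, geometric representations of circle and disk graphs enable sweep-type arguments yielding hitting sets of size $O(\sqrt n)$ or $O(n/\log n)$, and sublinear balanced separators permit a divide-and-conquer that is preserved under heredity.

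The main obstacle, and the reason the conjecture remains widely open, is executing this dichotomy in the absence of any ambient hypothesis: without a forbidden subgraph, a geometric model, or a separator bound, it is unclear how to either locate a concentrated core inside $\mathcal{F}$ or extract a useful combinatorial invariant from $G$. I expect the hardest step to be the unconditional ``spread'' case, in which $\mathcal{F}$ may be simultaneously exponentially numerous and combinatorially unstructured; here the standard tools (VC-dimension estimates, sunflower-type extraction, entropy compression) appear insufficient, and a full resolution would likely require a genuinely new invariant that ties the spread of $\mathcal{F}$ together with the global structure of $G$. For this reason my proposal is realistic only as a programme that first establishes the conjecture conditionally in each of the three structural regimes identified above, and leaves the unconditional case as the principal remaining difficulty.
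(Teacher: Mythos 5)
This is a conjecture, and the paper contains no proof of it for your attempt to be compared against: the authors state explicitly that the Bollob\'as--Erd\H{o}s--Tuza conjecture ``remains widely open'' and establish it only for restricted classes of graphs. The meaningful question is therefore whether your proposal, if executed, would constitute a proof, and your own final paragraph already answers this honestly---it would not.

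Your preliminary computation is correct as far as it goes. The greedy (or random-sampling) argument does yield $h(G)=O\bigl(\frac{n}{\alpha}\log|\mathcal{F}|\bigr)$, which is $O(\log|\mathcal{F}|)$ once $\alpha(G)=\Omega(n)$, and is thus $o(n)$ whenever $\log|\mathcal{F}|=o(n)$. This is a genuine partial result, in the same spirit as Alon's container-based bound $O(\sqrt{n\log n})$ for regular graphs with $\alpha>(\tfrac{1}{4}+\varepsilon)n$: containers, in effect, control the relevant family of independent sets. But $|\mathcal{F}|$ can be $2^{\Theta(n)}$ even in trivial examples (a disjoint union of $n/3$ triangles has $3^{n/3}$ maximum independent sets, with $\alpha=n/3$), so the conjecture lives entirely in the regime you yourself flag as the obstacle. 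The dichotomy you sketch---either find a small core concentrated in a dense subfamily of $\mathcal{F}$, or extract an exploitable structural invariant of $G$---is essentially the template the paper instantiates for its three conditional theorems (local sparsity, geometric representations, sublinear separators). The step you identify as missing, namely the unconditional ``spread'' case with no ambient structural hypothesis, is precisely what the authors also leave open. In short, your proposal is an accurate reading of the paper's programme and of where it stops, but it is not, and does not claim to be, a proof of the conjecture.
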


For this conjecture, there was a clever observation of Hajnal~\cite{1965Hajnal}, (also see Lemma~1 in~\cite{2011JGTLandon}) which states that for all maximum independent sets in $G$, say $A_{1},A_{2},\ldots,A_{s}$, the following inequality always holds:
\begin{equation*}
|A_{1}\cap A_{2}\cap\cdots\cap A_{s}|+|A_{1}\cup A_{2}\cup\cdots\cup A_{s}|\ge 2\alpha(G). 
\end{equation*}
As a simple consequence, one can see that if $\alpha(G)>\frac{|G|}{2}$, then $|A_{1}\cap A_{2}\cap\cdots\cap A_{s}|>0$, which implies that $h(G)=1$. However,~\cref{conj:BETConj} even remains open in the case of $c=\frac{1}{2}-\varepsilon$ for any small $\varepsilon>0$. A recent major progress of~\cref{conj:BETConj} due to Alon~\cite{2021AlonHittingSet} showed that for every $n$-vertex regular graph $G$ with $\alpha(G)>(\frac{1}{4}+\varepsilon)n$ for arbitrarily small $\varepsilon>0$, we have $h(G)\le O(\sqrt{n\log{n}})$. Alon's proof elegantly combined the container method~\cite{2016AlonSpencer,2015JAMSBalogh,2015HypergraphContainer} and the above observation of Hajnal. On the other hand, Alon~\cite{2021AlonHittingSet} provided an explicit construction of graph $G$ with $\alpha(G)=\Theta(n)$ and $h(G)=\Theta(\sqrt{n})$, which means that the $o(n)$ term in~\cref{conj:BETConj} cannot be replaced by $o(\sqrt{n})$. Moreover, Alon provided another type of construction of graph $G$ with $n=2^{m}$ vertices such that $\alpha(G)=\sum_{i=0}^{\frac{m}{2}-t}\binom{n}{i}$ and $h(G)=\Theta(m^{2})$. This construction is mainly based on some ideas from discrepancy theory and Kleitman's isodiametric inequality~\cite{1966Kleitman}, which also disproves a conjecture of Dong and Wu~\cite{2022SIAMWuDong}. In particular, Alon~\cite{2022Alon} suggested to study whether 3-colorable graphs satisfy~\cref{conj:BETConj}, and showed that for every integer $q\ge 1$, there exists an $8$-colorable graph $G_{q}$ with $h(G)>q$, see~\cite[Theorem~1.3]{2023P5Free}.

Notice that the condition ``maximum independent sets'' cannot be replaced by ``maximal independent sets''. To see this, let $G$ be a book graph that consists of a clique $K_{(1-c)n}$, and the other $cn$ vertices are only adjacent to all vertices in the clique $K_{(1-c)n}$. Then we need exactly $(1-c)n+1$ vertices to hit all maximal independent sets.

One way to better understand~\cref{conj:BETConj} is to test whether it holds for specific classes of graphs. In recent work of Hajebi, Li and Spirkl~\cite{2023P5Free}, the authors put this idea into practice. More precisely, they proved that for any induced $P_{5}$-free graph $G$, $h(G)\le f(\omega(G))$, where $P_{5}$ is a path with $5$ vertices, $f: \mathbb{N}\rightarrow \mathbb{N}$ is a function and $\omega(G)$ is the largest size of a clique in $G$.
A family of graphs $\mathcal{G}$ is termed \Yemph{hereditary} if, for any graph $G\in \mathcal{G}$ and any induced subgraph $G'$ of $G$, we have $G'\in \mathcal{G}$. We say that a hereditary class $\mathcal{G}$ of graphs has the Erd\H{o}s-Hajnal property if there exists some $\varepsilon>0$ such that every graph $G\in\mathcal{G}$ satisfies either $\alpha(G)>|V(G)|^{\varepsilon}$ or $\omega(G)>|V(G)|^{\varepsilon}$. The famous Erd\H{o}s-Hajnal conjecture~\cite{1989DAMEH} states that every hereditary class of graphs has Erd\H{o}s-Hajnal property. Hajebi, Li and Spirkl~\cite{2023P5Free} recently provided a new approach to this conjecture. More precisely, they showed that for a hereditary class $\mathcal{G}$ of graphs, if every graph $G\in\mathcal{G}$ satisfies $h(G)\le \textup{Poly}(\omega(G))$, then $\mathcal{G}$ satisfies Erd\H{o}s-Hajnal property.

However, this approach does not always work. Note that the family of graphs with VC-dimension at most $d\in\mathbb{N}$ is hereditary.
\begin{theorem}\label{thm:NotHitBounded}
   There exists some graph $G$ with VC-dimension at most $3$ and $\omega(G)=3$ such that $h(G)\ge \frac{\sqrt{|G|}}{2}$. 
\end{theorem}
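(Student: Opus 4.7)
We plan to construct an explicit graph $G$ on $n = k^{2}$ vertices with $\omega(G) = 3$, VC-dimension at most $3$, and $h(G) \geq k/2 = \sqrt{n}/2$. The design will be a ``pivot and connector'' construction: we take $V = X \cup Y$ where $X = \{x_{1}, \ldots, x_{k}\}$ are pivots, $Y$ is a set of $\Theta(k^{2})$ connectors, each $y \in Y$ is labelled by an unordered pair $\{x_{i}, x_{j}\}$ and is adjacent to exactly those two pivots, and inside $X$ we place the edges of a triangle-free graph $H$. We arrange so that every edge $\{x_{i}, x_{j}\} \in E(H)$ coincides with the label of some $y \in Y$, producing the triangle $\{x_{i}, x_{j}, y\}$ in $G$; these will be all triangles of $G$. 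Hence $\omega(G) = 3$, and $K_{4}$ is ruled out because $Y$ is independent and $H$ is triangle-free.

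The first technical step is bounding the VC-dimension. Each $y \in Y$ has $|N(y)| = 2$, so a shattered $4$-set could not be witnessed by $Y$-neighborhoods alone. We will verify by case analysis, splitting on how the candidate shattered set meets $X$ and $Y$, that the rigid pair-labelling of $Y$ together with the triangle-freeness of $H$ prevents any $4$-element set from being shattered, giving VC-dimension at most $3$.

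The heart of the proof is the lower bound $h(G) \geq k/2$. We will exhibit $k$ maximum independent sets $M_{1}, \ldots, M_{k}$ of $G$ — each $M_{i}$ obtained by a ``cyclic shift'' of a canonical maximum independent set through a chosen symmetry of the construction — forming a $2$-cover of $V$, in the sense that every vertex of $V$ lies in at most $2$ of them. For any hitting set $T$ we have $\sum_{i=1}^{k} \mathbf{1}[T \cap M_{i} \neq \emptyset] = k$, while by the $2$-cover property this same sum equals $\sum_{v \in T} |\{i : v \in M_{i}\}| \leq 2|T|$; combining yields $|T| \geq k/2 = \sqrt{n}/2$.

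The main obstacle is the simultaneous control of the three conditions: the family $\{M_{i}\}$ demands enough symmetry in $H$ and in the pair-labelling to produce $k$ distinct ``rotated'' maximum independent sets with the $2$-cover property, but added symmetry risks either creating $K_{4}$ subgraphs or allowing a $4$-element set to be shattered. Verifying the VC-dimension bound for the specific symmetric construction will be the most delicate step.
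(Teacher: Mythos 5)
Your plan takes a genuinely different route from the paper, which simply reuses Alon's explicit construction from~\cite{2021AlonHittingSet}: vertices are the ordered pairs $(i,j)$ with distinct $i,j\in\{1,\dots,2k\}$, and $(a,b)\sim(c,d)$ when $b=c$ or $a=d$. Alon already proved $h(G)\ge\sqrt{n}/2$ for this graph, so the paper's contribution is only the verification that $\omega(G)=3$ and that the VC-dimension is at most $3$.

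However, your pivot--connector design has a structural flaw that precedes any of the three verifications you outline. With $|X|=k$, $|Y|=\Theta(k^2)$, each $y\in Y$ adjacent only to its two labelled pivots, and $Y$ independent, the set $Y$ alone is an independent set of size $\Theta(k^2)$. Including a pivot $x_i$ in an independent set forbids all $d_i$ of its $Y$-neighbours, so $x_i$ can lie in a maximum independent set only if $d_i\le 1$; since $\sum_i d_i = 2|Y| = \Theta(k^2)$, at most $k$ pivots have $d_i\le 1$, and hence at most $k$ connectors are ever swapped out of a maximum independent set. Every other $y\in Y$ --- all but at most $k$ of them --- therefore lies in \emph{every} maximum independent set, so in fact $h(G)=1$. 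The family of $k$ distinct maximum independent sets with the $2$-cover property that your LP-duality step needs simply does not exist in this graph, no matter how you choose $H$ or the pair-labelling. The abundance of genuinely incomparable maximum independent sets is exactly what the ordered-pair graph supplies (one for each fixed-point-free permutation of the ground set) and what your construction lacks; any design in which the connectors form one enormous independent set and each connector has only $O(1)$ neighbours will collapse in the same way.
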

~\cref{thm:NotHitBounded} follows from Alon's construction in~\cite[Theorem~1.3]{2021AlonHittingSet}, we will discuss this in~\cref{sec:BVCBVC}.

\subsection{Our contributions}
In this paper, we will demonstrate the validity of~\cref{conj:BETConj} for several important classes of graphs, some of which stem from geometry, while others exhibit highly favorable combinatorial properties. 

A \Yemph{hole} in a graph is an induced subgraph which is a cycle of length at least four. A hole is called even if it has an even number of vertices. An \Yemph{even-hole-free graph} is a graph with no even holes. One of the most important sub-classes of the even-hole-free graphs is the family of chordal graphs. A \Yemph{chordal graph} is a simple graph in which every cycle of length four and greater has a cycle chord.

The absence of even-hole-free graphs attracts broad attention in the field of graph theory. The renowned strong perfect graph theorem, established by Chudnovsky, Robertson, Seymour, and Thomas~\cite{2006AnnalsStrongPerfect}, states that a graph achieves perfection precisely when it forbids both odd holes and odd anti-holes (defined as the complements of holes). Graphs without any odd holes and odd anti-holes are termed Berge graphs. It has been observed that the structure of even-hole-free graphs bears striking similarities to that of Berge graphs, for instance, the prohibition of induced $4$-holes necessitates the exclusion of any anti-hole with a length of at least $6$. For a positive integer $\gamma$, we define a vertex $v\in V(G)$ to be \emph{$\gamma$-simplicial} if its neighborhood $N_{G}(v)$ forms a disjoint union of at most $\gamma$ cliques. Chudnovsky and Seymour~\cite{2023JCTBEvenHoleFree2w} recently provided a valid proof and showed that every even-hole-free graph contains a $2$-simplicial vertex, thereby settling a conjecture posed by Reed~\cite{2001Reed}. It then follows that $h(G)\le 2\omega(G)+1$ by~\cref{obs:Basic}. We also refer interested readers to a survey for even-hole-free graphs~\cite{2010DMEVenHoleSurvey}. Here we confirm that~\cref{conj:BETConj} holds for any even-hole-free graph.

\begin{theorem}\label{thm:evenHoleFree}
    Let $G$ be an even-hole-free graph with $\alpha(G)=cn$ for some $c>0$, then we have $h(G)\le \frac{5n}{\log{n}}$.
\end{theorem}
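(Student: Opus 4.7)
My plan is to combine the Chudnovsky--Seymour structural theorem with a counting-plus-sampling argument that exploits local sparseness. Since even-hole-freeness is hereditary, iteratively extracting $2$-simplicial vertices yields an elimination ordering $v_1 \prec v_2 \prec \cdots \prec v_n$ of $V(G)$ such that, writing $N_i = N_G(v_i) \cap \{v_{i+1},\ldots,v_n\}$, each back-neighborhood $N_i$ is a disjoint union of at most two cliques; in particular $\alpha(G[N_i]) \le 2$ for every $i$. This quantitative local sparseness is the key input fed into the rest of the argument.

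Next, I would aim to bound the number of maximum independent sets of $G$ by $2^{O(n/\log n)}$ via a Kleitman--Winston / container-style encoding walking down the elimination ordering. The savings come from the $2$-simplicial property: when $v_i \in I$ we force the whole back-neighborhood $N_i$ to be absent from $I$ (a potentially huge ``free'' commitment), while when $v_i \notin I$ the intersection $I \cap N_i$ has size at most two regardless of $|N_i|$, since each of the two cliques contributes at most one independent vertex. Combined with the Ramsey bound $R(3,k) = O(k^2/\log k)$, which guarantees that any back-neighborhood either has size $O(\log n)$ or harbours a clique of size $\Omega(\sqrt{|N_i|\log|N_i|})$, an entropy-style amortisation across the order should certify that specifying a maximum independent set costs only $O(n/\log n)$ bits in total.

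With the enumeration bound in hand, I would take $T$ by including each vertex independently with probability $p = C/\log n$ for a sufficiently large constant $C = C(c)$. Then $|T|$ is concentrated around $Cn/\log n$, and tuning $C$ delivers $|T| \le 5n/\log n$; meanwhile for each maximum independent set $I$ of size $cn$ one has $\Pr[T \cap I = \emptyset] \le (1-p)^{cn} \le e^{-Ccn/\log n}$. Multiplying by the $2^{O(n/\log n)}$ enumeration bound and applying the union bound shows that some realisation of $T$ is simultaneously a hitting set of the prescribed size, concluding the proof.

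The main obstacle I foresee is the middle step: upgrading the qualitative bound $\alpha(G[N_i])\le 2$ to a quantitative $2^{O(n/\log n)}$ count of maximum independent sets. The delicate point is that when $v_i\notin I$ the two cliques of $N_i$ still permit non-trivial choices for $I\cap N_i$ unless one carefully accounts, via Ramsey, for the tradeoff between $|N_i|$ and the constituent clique sizes. I anticipate having to split the ordering into regimes based on $|N_i|$ and amortise the information cost across the elimination to squeeze out the required $\log n$ factor; the Hajnal intersection inequality from the introduction may also be useful to reduce to the regime where the ``fingerprint'' of $I$ along the ordering is highly informative.
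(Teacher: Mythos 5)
Your proposal rests on a claim that is false even for very simple even‑hole‑free graphs, and I don't see how the rest of the plan survives this. You propose to bound the number of maximum independent sets of an even‑hole‑free graph $G$ with $\alpha(G)=cn$ by $2^{O(n/\log n)}$ and then union‑bound a random subset against this count. But take $G$ to be the windmill graph on $n=2m+1$ vertices: one centre $v$ joined to $m$ pairwise‑disjoint edges $\{x_i,y_i\}$, with $x_iy_i\in E(G)$. This graph is chordal (hence even‑hole‑free), $\alpha(G)=m=\Theta(n)$, and there are exactly $2^m=2^{\Theta(n)}$ maximum independent sets (one choice of $x_i$ or $y_i$ per petal). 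With $p=C/\log n$ the failure probability per maximum independent set is $\approx e^{-Ccn/\log n}=2^{-o(n)}$, so the union bound over $2^{\Theta(n)}$ sets diverges, and no choice of the constant $C$ rescues it. (The theorem itself is of course still true here; $h(G)\le 2$ since $\{x_1,y_1\}$ is a hitting set. The point is only that your enumeration step fails.) The $2$‑simplicial elimination ordering, the Ramsey regimes, and the entropy amortisation you sketch are all aimed at this enumeration bound, which cannot hold; they also don't lead anywhere that sidesteps the union bound.

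The paper's argument is much more elementary and avoids counting maximum independent sets entirely. The key observation (\cref{claim:NoCycle}) is that for any two maximum independent sets $I_1,I_2$, the induced subgraph $G[I_1\triangle I_2]$ is bipartite, so its shortest cycle, if any, would be an induced even cycle of length at least four, i.e.~an even hole. Hence $G[I_1\triangle I_2]$ is a forest and $e(G[I_1\triangle I_2])< |I_1\triangle I_2|$. This local sparseness across pairs of maximum independent sets is exactly the hypothesis of the general framework in \cref{thm:General:Sparse}: one greedily extracts a short sequence $I_1,\dots,I_t$ of maximum independent sets that together nearly cover every maximum independent set, uses the sparse bipartite structure between $I_1$ and $\bigcup_{i\ge 2}I_i$ to find either a low‑degree vertex or a small set $C\subseteq I_1$ with a small neighbourhood inside $\bigcup_i I_i$, and shows that $C\cup N(C)$ hits everything. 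No probabilistic sampling and no enumeration bound are needed. If you want to salvage a probabilistic argument, you would have to partition maximum independent sets into far fewer than $2^{\Theta(n)}$ ``equivalence classes'' that a single random vertex selection hits or misses together, which is essentially what the deterministic greedy covering in the paper accomplishes.
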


To prove~\cref{thm:evenHoleFree}, we develop a framework to show that graphs with certain locally sparse properties satisfy~\cref{conj:BETConj}, see~\cref{thm:General:Sparse}. Under this framework, we can also prove~\cref{conj:BETConj} holds for the \Yemph{disk graph}, which is the intersection graph of disks in two-dimensional space.

\begin{theorem}\label{thm:Disk}
 Let $G$ be an $n$-vertex disk graph with $\alpha(G)=cn$ for some $c>0$, then we have $h(G)\le\frac{25n}{\log{n}}$.
\end{theorem}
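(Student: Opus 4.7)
\medskip
\noindent\textbf{Proof proposal.} The plan is to show that disk graphs fit into the unified sparse framework developed in \cref{thm:General:Sparse}. Since the class of disk graphs is hereditary (induced subgraphs of disk graphs are themselves disk graphs), it suffices to establish the following local structural property: every disk graph contains a vertex whose open neighborhood decomposes as a disjoint union of at most $6$ cliques, i.e.\ is $6$-simplicial in the sense defined above. Once this is in place, the bound $h(G)\le \frac{25n}{\log n}$ would follow by directly invoking \cref{thm:General:Sparse} with simpliciality parameter $\gamma=6$, mirroring the way the framework handles even-hole-free graphs in the proof of \cref{thm:evenHoleFree}.

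To exhibit the required simplicial vertex, I would fix a realization of $G$ by disks $\{D_{v}\}_{v\in V(G)}$ and pick a vertex $v^{*}$ whose disk has the smallest radius $r^{*}$. I would partition the plane around the center $c^{*}$ of $D_{v^{*}}$ into six cones of angular width $60^{\circ}$ with apex $c^{*}$, and assign each neighbor $u$ of $v^{*}$ to the cone containing its disk center $c_{u}$ (breaking ties on cone boundaries arbitrarily). The crucial claim is that any two neighbors $u,w$ landing in the same cone are adjacent in $G$. Assuming $d_{u}:=|c_{u}-c^{*}|\le d_{w}:=|c_{w}-c^{*}|$ with angle at $c^{*}$ at most $60^{\circ}$, the law of cosines together with $\cos\theta\ge 1/2$ yields
\[
|c_{u}-c_{w}|^{2}\le d_{u}^{2}+d_{w}^{2}-d_{u}d_{w}=d_{w}^{2}-d_{u}(d_{w}-d_{u})\le d_{w}^{2}.
\]
Since $w$ is a neighbor of $v^{*}$ we have $d_{w}\le r^{*}+r_{w}$, and by minimality $r_{u}\ge r^{*}$, so $|c_{u}-c_{w}|\le d_{w}\le r^{*}+r_{w}\le r_{u}+r_{w}$, which forces $D_{u}\cap D_{w}\ne\emptyset$ and hence $u\sim w$.

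Combining this smallest-disk argument with the hereditary nature of disk graphs shows that every induced subgraph of $G$ admits a $6$-simplicial vertex, which is precisely the locally sparse hypothesis that \cref{thm:General:Sparse} is designed to exploit; plugging in $\gamma=6$ then delivers the claimed bound. The principal obstacle lies in the geometric lemma: the $60^{\circ}$ angular resolution is essentially forced, since wider apertures would fail to guarantee pairwise intersection of disks within a common cone, while narrower ones would inflate the number of clique classes and blow up the constant fed into the framework. With the correct choice of cones the covering claim reduces to the one-line law-of-cosines estimate above, and the remainder of the proof is a mechanical invocation of the general framework.
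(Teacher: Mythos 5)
Your strategy is the same as the paper's at the level of the framework: you pick the smallest disk and invoke \cref{thm:General:Sparse}, exactly as the paper does. The cone construction is also a valid and rather clean argument -- it shows that the smallest‐disk vertex is a $6$-simplicial vertex, which is in fact sharper than what the paper proves as a standalone structural result in \cref{thm:11-simDisk}. So the geometric core of what you write is correct.

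The issue is the constant, and more precisely the way you feed the geometry into \cref{thm:General:Sparse}. That theorem takes an edge-density parameter $s$ satisfying $e(G[I_1\triangle I_2])\le s\,|I_1\triangle I_2|$ for every pair of maximum independent sets, and then outputs $h(G)\le \frac{5sn}{\log n}$; it has no ``simpliciality parameter''. To turn your $\gamma$-simplicial statement into the needed density bound you have to observe that $G[I_1\triangle I_2]$ is \emph{bipartite} (each part is contained in a maximum independent set), and in a bipartite graph a $\gamma$-simplicial vertex has degree at most $\gamma$ (cliques inside an independent set are singletons). Since disk graphs are hereditary, every induced subgraph of $G[I_1\triangle I_2]$ again has such a vertex, so $G[I_1\triangle I_2]$ is $\gamma$-degenerate and $e(G[I_1\triangle I_2])\le \gamma\,|I_1\triangle I_2|$. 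With your $\gamma=6$ this produces $s=6$ and $h(G)\le \frac{30n}{\log n}$, not the stated $\frac{25n}{\log n}$; the arithmetic ``$\gamma=6$ gives $25n/\log n$'' does not close. The cone argument is inherently unable to reach $5$: five $72^\circ$ sectors do not force two same-sector disks to intersect, as the law-of-cosines estimate needs $\cos\theta\ge\frac12$. The paper instead proves directly that a disk cannot meet $6$ pairwise \emph{disjoint} larger-or-equal disks: disjointness gives \emph{strict} inequalities, so the six consecutive angles around the small center would each be strictly larger than $\pi/3$ and sum to more than $2\pi$. That strictness, which the sector partition throws away, is exactly what drops the bound from $6$ to $5$ and yields $s=5$ and the advertised $\frac{25n}{\log n}$.
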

 Our subsequent result concerning disk graphs follows in a similar vein as the recent work of Chudnovsky and Seymour~\cite{2023JCTBEvenHoleFree2w}, which is also related to the problem of Hajebi, Li and Spirkl~\cite{2023P5Free}.
\begin{theorem}\label{thm:11-simDisk}
   Every disk graph $G$ contains an $11$-simplicial vertex. Consequently, every disk graph $G$ satisfies $h(G)\le 11\omega(G)+1$.
\end{theorem}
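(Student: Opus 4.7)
The plan is to produce an $11$-simplicial vertex in any disk graph $G$, and then deduce $h(G)\le 11\omega(G)+1$ via the standard $\gamma$-simplicial-to-hitting-set reduction that underlies the consequence noted for even-hole-free graphs. Fix a disk representation of $G$ and let $v$ be a vertex whose disk $D_v$ (center $c_v$, radius $r_v$) has minimum radius in the representation. Every neighbor $u$ of $v$ then satisfies $r_u\ge r_v$ and $|c_u-c_v|\le r_u+r_v$.

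To exhibit $v$ as $11$-simplicial, I would first single out the neighbors whose disks contain $c_v$: the set $C_0:=\{u\in N(v): c_v\in D_u\}$ is a clique since any two of its disks share the point $c_v$. For the remaining neighbors $|c_u-c_v|>r_u>0$, so the direction from $c_v$ to $c_u$ is well defined, and I would partition them into $10$ angular sectors of width $36^\circ$ around $c_v$, yielding classes $C_1,\ldots,C_{10}$. The crux is to show that each $C_i$ with $i\ge 1$ is a clique: for $u_1,u_2\in C_i$, set $d_j:=|c_{u_j}-c_v|\in (r_{u_j},r_{u_j}+r_v]$ and $\theta:=\angle c_{u_1}c_vc_{u_2}\le 36^\circ$; the law of cosines gives
\begin{equation*}
|c_{u_1}-c_{u_2}|^2=d_1^2+d_2^2-2d_1d_2\cos\theta,
\end{equation*}
and a direct computation shows that this function, on the box $(r_{u_1},r_{u_1}+r_v]\times(r_{u_2},r_{u_2}+r_v]$, has no interior maximum and attains its maximum at the corner $d_j=r_{u_j}+r_v$. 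After substitution, the target inequality $|c_{u_1}-c_{u_2}|^2\le (r_{u_1}+r_{u_2})^2$ simplifies to
\begin{equation*}
(r_{u_1}+r_v)(r_{u_2}+r_v)(1-\cos\theta)\le 2\,r_{u_1}r_{u_2}.
\end{equation*}
Since $r_{u_j}\ge r_v$ forces $\tfrac{r_{u_j}}{r_{u_j}+r_v}\ge \tfrac{1}{2}$, the right-hand side is at least $\tfrac{1}{2}(r_{u_1}+r_v)(r_{u_2}+r_v)$, so any $\theta$ with $1-\cos\theta\le \tfrac{1}{2}$ (i.e.\ $\theta\le 60^\circ$) works, and $\theta\le 36^\circ$ is comfortably within this range.

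With $N(v)=C_0\cup C_1\cup\cdots\cup C_{10}$ a disjoint union of $11$ cliques, $v$ is $11$-simplicial and $|N(v)|\le 11\omega(G)$. For the hitting-set conclusion, set $T:=\{v\}\cup N(v)$, so $|T|\le 11\omega(G)+1$. If some maximum independent set $I$ were disjoint from $T$, then $v\notin I$ and $I\cap N(v)=\emptyset$, so $I\cup\{v\}$ would be an independent set strictly larger than $I$, contradicting maximality. Hence $T$ meets every maximum independent set and $h(G)\le 11\omega(G)+1$.

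The hard part will be the uniform trigonometric estimate in the sector claim: the inequality must hold simultaneously over all $(r_{u_1},r_{u_2})\in [r_v,\infty)^2$, and it is exactly the smallest-disk property $r_{u_j}\ge r_v$ that keeps the ratio $\tfrac{2r_{u_1}r_{u_2}}{(r_{u_1}+r_v)(r_{u_2}+r_v)}$ bounded below by $\tfrac{1}{2}$; without this no fixed sector width would suffice. The remaining steps are routine: checking that the maximum of the law-of-cosines expression on the relevant box is indeed at the upper-right corner, and applying the standard $\gamma$-simplicial-to-hitting-set reduction.
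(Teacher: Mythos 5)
Your overall strategy (angular sectors around the minimum-radius disk) is sound and is genuinely different from the paper's, which instead shrinks each neighbor's disk $D_u$ to a unit disk $D'(u)\subseteq D_u$ internally tangent on the near side and then pierces the resulting family of unit disks with $11$ explicit points. But there is a gap in your sector-clique verification. You correctly observe that $g(d_1,d_2)=d_1^2+d_2^2-2d_1d_2\cos\theta$ is convex on the box $(r_{u_1},r_{u_1}+r_v]\times(r_{u_2},r_{u_2}+r_v]$, so its maximum is at a corner; however, the claim that the maximum is attained at $d_j=r_{u_j}+r_v$ is simply false. Already in the most symmetric case $r_{u_1}=r_{u_2}=r_v=1$ and $\theta=36^\circ$, one computes
\begin{equation*}
g(2,2)=8-8\cos 36^\circ\approx 1.53,\qquad g(1,2)=g(2,1)=5-4\cos 36^\circ\approx 1.76,
\end{equation*}
so the maximum sits at one of the mixed corners $(r_{u_1},r_{u_2}+r_v)$ or $(r_{u_1}+r_v,r_{u_2})$, not at the corner you analyze. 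More generally, for a fixed $d_2$ the map $d_1\mapsto g(d_1,d_2)$ is decreasing throughout the interval whenever $r_{u_1}+r_v<d_2\cos\theta$ (which happens as soon as one radius is moderately larger than the other), so the maximizing corner depends on the relative sizes of $r_{u_1}$ and $r_{u_2}$.

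The theorem is still saved by your method, because the target inequality $g(d_1,d_2)\le(r_{u_1}+r_{u_2})^2$ does in fact hold at all four corners under $\theta\le 36^\circ$ and $r_{u_j}\ge r_v$: you have checked $(r_{u_1}+r_v,r_{u_2}+r_v)$; at $(r_{u_1},r_{u_2})$ the inequality reduces to $-\cos\theta\le 1$; and at the mixed corner $(r_{u_1}+r_v,r_{u_2})$ it reduces to $r_v\bigl(r_v+2r_{u_1}-2r_{u_2}\bigr)+2(r_{u_1}+r_v)r_{u_2}(1-\cos\theta)\le 4r_{u_1}r_{u_2}$, which is easily seen to hold using $r_{u_1},r_{u_2}\ge r_v$ and $\cos\theta\ge\tfrac12$ (and symmetrically for the other mixed corner). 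So the proof can be completed, but as written the single-corner check does not suffice and the claim ``attains its maximum at the corner $d_j=r_{u_j}+r_v$'' must be replaced by a verification at all four corners (or by the paper's shrink-to-unit-disks reduction, which sidesteps the case analysis entirely). The deduction $h(G)\le 11\omega(G)+1$ from the $11$-simplicial vertex via $\{v\}\cup N(v)$ is correct.
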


The constant $11$ above might be improved and we make no attempt to optimize it here. There are some special sub-families of the disk graphs, for instance, a \Yemph{unit disk graph} is a disk graph with an intersection graph consisting of disks of unit size, one can easily show $h(G)=O(1)$ when $\alpha(G)=cn$ since unit disk graph $G$ is induced $K_{1,6}$-free. A \Yemph{coin graph} is a disk graph with the additional condition that no two disks have more than one point in common. Note that the circle packing theorem, first proven by Paul Koebe~\cite{1936CirclePacking} in 1936, states that a graph is planar only if it is a coin graph. Therefore any $n$-vertex coin graph has at most $3n-6$ edges, which implies we can only use $6$ vertices to hit all maximum independent sets in any coin graph by~\cref{obs:Basic}.

Moreover, the results in~\cref{thm:Disk} can be generalized to the intersection graph of higher dimensional balls by simple modification of our proof. One of the useful auxiliary results is that, in any $d$-dimensional space, the number of unit balls that can touch a unit ball is bounded by $f(d)$ for some function $f$~\cite{1979HighDimSphere}. For instance, $f(2)=6$ implies that the unit disk graph is induced $K_{1,6}$-free.

Next, our attention shifts to the \Yemph{circle graph} $G$, where $V(G)$ comprises a finite set of chords on a circle, with two vertices being adjacent if and only if their corresponding chords intersect. Circle graphs and their representations play a pivotal role in numerous unexpected fields. Exemplary instances include knot theory~\cite{1993InvenKnotCircleGraph} and quantum computing~\cite{2005PRAquan}. Also, circle graphs have garnered considerable interest both in graph theory~\cite{1994JCTBCircleGraph,2020CircleGraphNaji,2009SangilCircleGraph} and computer science~\cite{2022CircleISMFCS,1985SIAMJCOmputing,2022CircleIsomorphism}. In particular, a very recent breakthrough of Davies and McCarty~\cite{2021BLMSChiCircle} showed that the chromatic number of circle graphs is quadratic in its clique number, which significantly improves the previous results in~\cite{1997DMCircleGraph,1988PashaCircle}. Shortly after this, the upper bound was further improved by Davies~\cite{2022PAMSCircle}, who showed the sharp upper bound $\chi(G)\le O(\omega(G)\log{\omega(G)})$ when $G$ is a circle graph. Further exploration of circle graphs can be found in an informative textbook~\cite{2004SurveyCircleGraph}. We can show that~\cref{conj:BETConj} also holds for circle graphs.

\begin{theorem}\label{thm:CircleGraphs}
    Let $G$ be a circle graph with $\alpha(G)=cn$ for some $c>0$, then $h(G)\le C\sqrt{n}$ for some constant $C$.
\end{theorem}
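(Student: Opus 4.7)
The plan is to combine Hajnal's intersection inequality with a geometric decomposition of the chord representation of $G$. I would begin by fixing a chord representation of $G$: each vertex is a chord of the unit disk with its two endpoints labelled $p_1,\dots,p_{2n}$ in cyclic order on the boundary circle, and two chords are adjacent iff their endpoint pairs interleave. Let $\mathcal{A}=\{A_1,\dots,A_s\}$ denote the family of all maximum independent sets of $G$. First I would apply Hajnal's inequality
\[
|A_1\cap\cdots\cap A_s|+|A_1\cup\cdots\cup A_s|\ge 2\alpha(G)=2cn.
\]
If the common intersection is nonempty, any single vertex in it is a hitting set of size $1$; otherwise the union has size at least $2cn-1$, so a linear number of vertices lie in some MIS, and we may restrict attention to the ``active'' induced subgraph.

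Next I would partition the $2n$ endpoints on the circle into $k=\lceil C\sqrt{n}\rceil$ consecutive arcs of equal size, for a constant $C$ to be chosen, and introduce the $k$ \emph{cut points} $s_1,\dots,s_k$ between consecutive arcs. Call a chord \emph{straddling} $s_i$ if its two endpoints lie on opposite sides of $s_i$ in the cyclic order. The key geometric observation is that any two straddling chords at $s_i$ are either crossing or nested; hence within any MIS $A$ the chords of $A$ straddling $s_i$ are pairwise nested, forming a chain under inclusion with a well-defined innermost element.

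For each cut point $s_i$ I would then select a small set $H_i\subseteq V(G)$ of $O(1)$ extremal straddling chords --- for instance the innermost straddling chord of $G$ at $s_i$ (the straddling chord with endpoints closest to $s_i$ on each side), together with one or two auxiliary variants --- and set $H=\bigcup_{i=1}^{k} H_i$, so $|H|=O(\sqrt{n})$. The claim is that $H$ hits every MIS, and my plan to verify this is by contradiction: if $A\cap H=\emptyset$, then for each $s_i$ the extremal chord at $s_i$ is missing from $A$, and by the maximality of $A$ some chord of $A$ must cross it. The geometry forces such a witness chord to be non-straddling at $s_i$ with one endpoint in the narrow ``gap'' between $s_i$ and an endpoint of the extremal chord. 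A pigeonhole and exchange argument over all $\sqrt{n}$ cut points, combined with the nested structure of $A$'s straddling chords and the total of $cn$ chords in $A$, should then produce either a local swap contradicting maximality of $A$ or a counting bound that is violated.

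The hard part will be this last step: choosing the auxiliary chords in $H_i$ appropriately and carrying out the swap uniformly across the $\sqrt{n}$ cut points. The selection must be rich enough to prevent any MIS from simultaneously avoiding every extremal, yet small enough at each cut point to keep $|H|=O(\sqrt{n})$. Since the bound $\Theta(\sqrt{n})$ is tight by Alon's construction~\cite{2021AlonHittingSet}, the choice $k=\Theta(\sqrt{n})$ is essentially forced, leaving little slack in the exchange argument.
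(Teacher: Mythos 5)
Your plan rests on the idea that at each of $\Theta(\sqrt{n})$ cut points one can pick a \emph{fixed} constant-size set of ``extremal'' straddling chords and prove that every maximum independent set must use one of them. This is precisely the step that does not go through, and you flag it yourself. First, the ``innermost straddling chord at $s_i$'' is not well-defined: there may be one straddling chord whose left endpoint is nearest to $s_i$ and a different one whose right endpoint is nearest, so no single chord dominates. Second, and more seriously, even if one fixes a choice, a maximum independent set $A$ avoiding your chosen extremal $v_i=(a,b)$ need not contain a non-straddling witness with an endpoint in the ``gap'': the witness $w\in A$ crossing $v_i$ can itself straddle $s_i$ (one endpoint between $a$ and $s_i$, the other far on the opposite side), and nothing forces one of its endpoints into a small region you control. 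You also have no uniformity across cut points: an exchange at $s_i$ may create or destroy crossings at $s_j$, and there is no counting bound in your sketch to close the loop. Adding ``one or two auxiliary variants'' does not obviously fix this, because an adversary can always arrange an MIS whose straddling chords at every $s_i$ are second-innermost, third-innermost, or otherwise miss any constant-size greedy choice. In short, the hitting set you propose is non-adaptive to $A$, and that seems to be where the method breaks.

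The paper's proof is structurally quite different and sidesteps exactly this difficulty. It views circle graphs as overlap graphs equipped with two partial orders (interval containment $\subseteq$ and left-of $\prec$), proves a ``$2$-intersecting'' lemma saying any vertex meets at most two elements of a $\prec$-chain, and then colours each vertex $x$ in any MIS $I$ by its cover-depth $C(x)=|\{y\in I: y\subseteq x\}|$. The key structural fact is that this colour is the same for every MIS containing $x$ (a swap argument shows otherwise one could enlarge an MIS), which turns a per-MIS quantity into a global vertex colouring. One then bounds the number of minimal (colour-$0$) vertices in any MIS by $O(\sqrt{n})$ via the $2$-intersecting degree bound, and finally shows that each of $\Theta(\sqrt{n})$ \emph{disjoint} unions of colour-layers $H_t$ is a hitting set; pigeonhole then yields one of size $O(\sqrt{n})$. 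The crucial robustness the paper gains, and your approach lacks, is that it never needs to pre-commit to $O(1)$ vertices per geometric location; instead it exhibits an entire \emph{family} of $\Theta(\sqrt n)$ candidate hitting sets and averages, so no single adaptive MIS can dodge all of them. If you want to save a geometric argument in your style, you would need an analogue of that averaging: choose at each cut point not a constant set but an entire level of straddling chords parameterized by ``nesting depth,'' prove that every MIS must meet at least one depth level at some cut point, and then average over depths. That is essentially a geometric re-reading of the paper's colouring.
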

Indeed, the result can be extended from circle graphs to more general graphs in an abstract setting, as shown in~\cref{thm:2-incomparable K-intersecting}. This will be further discussed in~\cref{section:CircleBetterBound}.

Let $\mathcal{F}\subseteq 2^{[n]}$ be a set system, the \Yemph{Vapnik-Chervonenkis dimension} (for short, \Yemph{VC-dimension}) of $\mathcal{F}$ is the largest integer $d$ for which there exists a subset $S\subseteq [n]$ with $|S|=d$ such that for every subset $B\subseteq S$, one can find a member $A\in\mathcal{F}$ with $A\cap S=B$. We say that the graph $G$ has VC-dimension $d$ if the set system $\mathcal{F}:=\{N_{G}(v):v\in V(G)\}$ induced by the neighborhood of vertices in $G$ has VC-dimension $d$. The VC-dimension is one of the most useful combinatorial parameters that measure its complexity. The notion was introduced by Vapnik and Chervonenkis~\cite{1971TPAVCVC} in 1971, as a tool in mathematical statistics.

We can derive the sharp upper bound on $h(G)$ for graphs with VC-dimension one. 
\begin{theorem}\label{thm:VC1}
    Let $G$ be an $n$-vertex graph with VC-dimension $1$, then $h(G)\le\max\{\frac{n}{\alpha(G)},3\}$. In particular, if $\alpha(G)=cn$ for positive constant $c\neq\frac{2}{5}$, then $h(G)\le\floor{\frac{1}{c}}$; if $\alpha(G)=\frac{2n}{5}$, then $h(G)\le 3$. 
\end{theorem}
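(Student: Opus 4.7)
The plan is to combine Hajnal's intersection inequality, an LP-duality bound, and the structural properties of VC-dimension $1$ graphs.

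The fractional LP bound gives $h^*(G)\le n/\alpha(G)=1/c$: the LP dual of the hitting set problem on the MIS-hypergraph is a fractional matching, and since every MIS has size $\alpha(G)$, we have $\alpha(G)\sum_I x_I=\sum_v\sum_{I\ni v} x_I\le n$. For a general hypergraph this fractional bound could have a $\Theta(\log n)$ integrality gap, but the VC-dimension $1$ hypothesis forces the gap to be essentially a constant via an $\varepsilon$-net argument (set systems of VC-dimension $1$ admit $\varepsilon$-nets of size $O(1/\varepsilon)$); applied with $\varepsilon=\alpha(G)/n$, this should yield the main bound $h(G)\le\max\{n/\alpha(G),3\}$.

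For the ``in particular'' sharpening $h(G)\le\floor{1/c}$ (with the $c=2/5$ exception), I would case-split on $c$. The case $c>1/2$ is immediate from Hajnal's inequality, which gives $\bigcap_I I\neq\emptyset$ and hence $h(G)=1$. The case $c\le 1/3$ follows from the main bound, since then $h(G)\le\floor{n/\alpha(G)}=\floor{1/c}$. In the remaining range $1/3<c\le 1/2$, the key observation is that among any three MISs at least one pair intersects (since $3cn>n$); combined with the VC-dimension $1$ pair-pattern restriction, namely that for every pair $\{u,v\}$ at least one of the four patterns $\emptyset,\{u\},\{v\},\{u,v\}$ is never realized as $N(w)\cap\{u,v\}$ for any $w\in V(G)$, this should allow me to locate two hitting vertices in all cases except a $C_5$-like MIS-hypergraph configuration, which corresponds exactly to $c=2/5$ and forces $h(G)\le 3$ rather than $2$.

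The main obstacle will be making the rounding step and the $c=2/5$ case analysis precise: the $\varepsilon$-net argument typically requires the hypergraph being hit (the MIS-hypergraph) to have bounded VC-dimension, not merely the underlying graph $G$. A likely workaround is to directly exploit the pair-pattern restriction to perform a twin-reduction or laminarization of the neighborhood family, reducing the problem to a small combinatorial case analysis on the possible MIS-hypergraph structures compatible with VC-dimension $1$; the $C_5$-in-the-MIS-hypergraph scenario must then be pinpointed as the unique obstruction preventing the stronger bound $h(G)\le 2$ from holding when $c=2/5$.
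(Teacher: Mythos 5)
Your main argument has a genuine gap that you yourself flag but do not close. The $\varepsilon$-net theorem for VC-dimension $d$ set systems bounds the size of a hitting set for the set system whose VC-dimension is being assumed bounded. Here, you need to hit the hypergraph of maximum independent sets of $G$, but the hypothesis bounds the VC-dimension of the \emph{neighborhood} set system $\{N_G(v) : v\in V(G)\}$. There is no general relationship transferring bounded VC-dimension from one to the other: maximum independent sets are global optima of a complement structure, and their trace complexity is not controlled by that of the neighborhood hypergraph. The ``twin-reduction or laminarization'' you sketch as a workaround is not an argument; it is precisely the missing structural content. Moreover, even if the MIS-hypergraph did have VC-dimension $1$, the standard $\varepsilon$-net bound is $O(\frac{1}{\varepsilon}\log\frac{1}{\varepsilon})$, which has a superfluous logarithmic factor and an uncontrolled constant, and would not yield the clean bound $\max\{n/\alpha(G),3\}$ or the exact value $\floor{1/c}$; you would need the sharp VC-dimension-$1$ $\varepsilon$-net theorem, itself resting on laminar structure you have not established.

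The paper takes a genuinely different, structural route. It first observes that a VC-dimension-$1$ graph is induced $P_5$-free, then splits on whether $G$ contains a triangle. In the triangle-free case it invokes Sumner's theorem: every component is bipartite (so $h\le 2$ by Hall plus Hajnal) or a blow-up of $C_5$, and careful bookkeeping on the five parts shows $h\le 2$ except precisely when all parts are equal, i.e.~$c=2/5$, where $h\le 3$. In the triangle case it proves a local lemma (every vertex outside a triangle is adjacent to at least two of its three vertices) and bootstraps it to show that \emph{all} maximum independent sets of $G$ are pairwise disjoint, so $h(G)\le n/\alpha(G)$. Your observation that a $C_5$-like configuration is the $c=2/5$ obstruction is correct and matches the paper, but the load-bearing steps of your plan (the $\varepsilon$-net rounding, the ``pair-pattern restriction'' pinning down two hitting vertices for $1/3<c\le 1/2$) remain unsubstantiated, so the proposal does not constitute a proof.
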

The upper bounds on~\cref{thm:VC1} are sharp for different region of $c$. The exact results reflect a rare phenomenon, where the performance of $h(G)$ is highly sensitive to the selection of $c$. Indeed, when $c\neq\frac{2}{5}$, if $\floor{\frac{1}{c}}$ is an integer, one can take the complete balanced $\frac{1}{c}$-partite graph, otherwise, one can take complete $\ceil{\frac{1}{c}}$-partite graph with parts $A_{1},A_{2},\ldots,A_{\ceil{\frac{1}{c}}}$ such that $|A_{1}|=(1-c\cdot\floor{\frac{1}{c}})n$ and $|A_{i}|=cn$ for any $i\ge 2$. It is easy to check that the complete multipartite graph has VC-dimension one. Particularly when $c=\frac{2}{5}$, we can see $G=C_{5}[\frac{n}{5}]$ satisfies $h(G)=3$ and the VC-dimension of $G$ is one. Note that when $\frac{1}{3}<c\le \frac{1}{2}$, only when $\frac{2}{5}$, $h(G)$ might be $3$, while in other cases $h(G)$ must be strictly less than $3$.

For convenience, we construct a partially ordered diagram illustrated in~\cref{figure:Main} to depict the principal findings of this paper and elucidate the interconnections among various graph classes, as well as some open problems.

\begin{figure}
\centering
\hspace*{-1.5cm}
\begin{tikzpicture}[node distance=0.9cm,
    every node/.style={rectangle, draw, align=center}, line width=0.8pt, scale=0.5]
    

\node (circle-trapezoid) {Circle $k$-gon graph: $O(\frac{n\log{\log{n}}}{\log{n}})$};
\node[line width=2pt,red] (interval filament) [above=of circle-trapezoid] {Interval filament graph:~\cref{conj:IntervalFl}};
\node (even) [left=of interval filament] {Even-hole-free graph: $O(\frac{n}{\log{n}})$};
\node[line width=2pt,blue] (String) [above=of interval filament] {String graph:~\cref{conj:StringGraph}};
    \node (chordal) [below=of even] {Chordal graph: $O(\frac{n}{\log{n}})$};
    \node (circle) [right=of interval filament] {Circle graph: $O(\sqrt{n})$};
    \node (trapezoid) [below=of chordal] {Trapezoid graph: $\frac{n}{\alpha(G)}$};
    \node (incomparability) [right=of trapezoid] {Incomparability graph: $\frac{n}{\alpha(G)}$};
    \node (circular-arc) [below right=of incomparability] {Circular-arc graph: $O(\frac{n}{\alpha(G)})$};
    \node (interval) [below right=of trapezoid] {Interval graph: $\frac{n}{\alpha(G)}$};
     \node (Perfect) [below=of circle] {Perfect graph: $\frac{n}{\alpha(G)}$};
    \node (disk) [above=of even] {Disk graph: $O(\frac{n}{\log{n}})$};
    \node[line width=2pt,orange] (bvc) [above=of String] {Graph with bounded VC-dimension:~\cref{conj:GraphWithBVC}};
    \node (unit disk) [above=of disk] {Unit disk graph: $O(\frac{n}{\alpha(G)})$};
    \node (vc 1) [above=of bvc] {Graph with VC-dimension one: $\frac{n}{\alpha(G)}$};

\draw[-Latex] (even) to (chordal);
\draw[-Latex] (interval filament) to (circle);
\draw[-Latex] (interval filament) to (chordal);
\draw[-Latex] (circle-trapezoid) to  (circle);
\draw[-Latex] (Perfect) to (incomparability);
\draw[-Latex] (chordal) to (trapezoid);
\draw[-Latex] (circle-trapezoid) to (trapezoid);
\draw[-Latex] (interval filament) to (incomparability);
\draw[-Latex] (incomparability) to (trapezoid);
\draw[-Latex] (trapezoid) to (interval);
\draw[-Latex] (circular-arc) to (interval);
\draw[-Latex] (circle-trapezoid) to (circular-arc);
\draw[-Latex] (String) to (disk);
\draw[-Latex] (String) to (interval filament);
\draw[-Latex] (bvc) to (disk);
\draw[-Latex] (disk) to (unit disk);
\draw[-Latex] (bvc) to (vc 1);
\draw[-Latex] (bvc) to (circle);

\end{tikzpicture}

\caption{Upper bounds on $h(G)$ for various families of graphs $G$ with $\alpha(G)=\Omega(n)$. $A\rightarrow B$: $B$ is a sub-family of $A$}\label{figure:Main}
\end{figure}
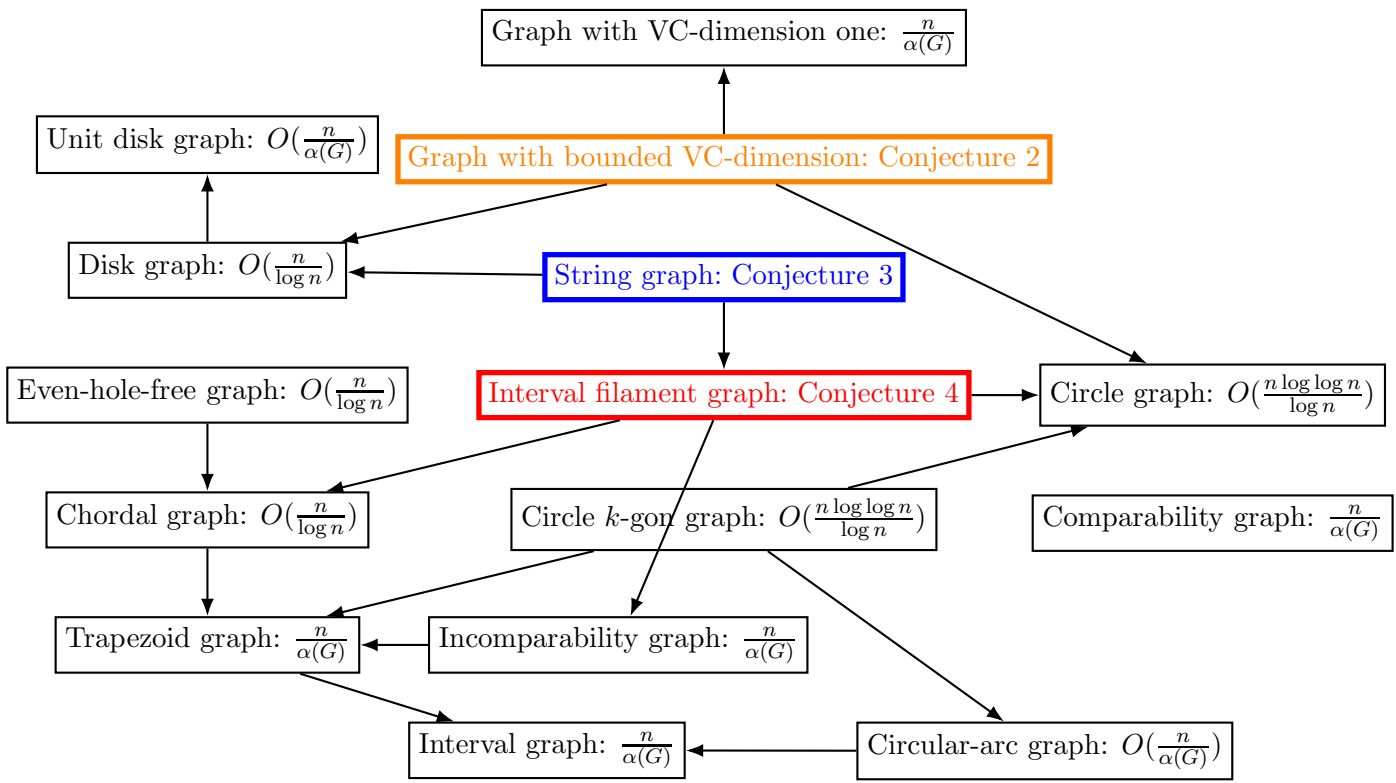

\medskip
{\bf \noindent Structure of this paper.} In~\cref{sec:Tools}, we introduce some basics on this topic and we provide the proof of~\cref{thm:VC1}. In~\cref{sec:LocalSparse}, we will show more general results for graphs with certain locally sparse properties in~\cref{thm:General:Sparse}. We also discuss the combinatorial properties of disk graphs and even-hole-free graphs and provide the proofs of~\cref{thm:evenHoleFree} and~\cref{thm:Disk} in detail. We will prove~\cref{thm:CircleGraphs} in~\cref{sec:Circle}. We conclude this paper and propose several conjectures related to the main topic of this paper in~\cref{sec:ConCludingRemarks}.

\medskip
{\bf \noindent Notations.}
For a vertex $u\in V(G)$, we will use $N_{G}(u)$ (or $N(u)$ if the subscript is clear) to denote the set of the neighborhood of $u$. Let $\delta(G):=\min_{v\in V(G)}|N_{G}(v)|$ be the minimum degree of $G$. Let $N_G(A)=\bigcup_{a\in A}N_G(a)$. For two sets $A$ and $B$, let $A\setminus B:=\{x\in A,x\notin B\}$ and let $A\triangle B:=(A\setminus B)\cup(B\setminus A) $ be the symmetric difference between $A$ and $B$. For a graph $H$, we write $H[t]$ the \Yemph{$t$-blowup} of $H$ by replacing every vertex of $H$ by an independent set of size $t$ and every edge of $H$ by a copy of $K_{t,t}$ and we write $H[\cdot]$ when mentioning a blow-up of $H$ without specifying its size (and the sizes of these independent sets could be different). We use $\MIS$ to denote the collection of all maximum independent sets in $G$. We say a graph $G$ is \Yemph{acyclic} if $G$ does not contain any cycle.

The notations $o(\cdot)$, $O(\cdot)$, $\Theta(\cdot)$ and $\Omega(\cdot)$ have their usual asymptotic meanings. We use $\log$ throughout to denote the base $2$ logarithm. For the sake of clarity of presentation, we omit floors and ceilings and treat large numbers as integers whenever this does not affect the argument.

\section{Warm-ups}\label{sec:Tools}
\subsection{Basics}
Concerning the main problem studied in this paper, there are some straightforward observations that were utilized in recent works~\cite{2022Alon,2023P5Free}. For completeness, we also include brief proofs.
\begin{prop}\label{obs:Basic}\
    \begin{enumerate}
        \item[\textup{(1)}] If $G$ has connected components $G_{1},G_{2},\ldots,G_{m}$, then $h(G)\le\min\limits_{i}h(G_{i})$.
        \item[\textup{(2)}] For any vertex $v\in V(G)$, $\{v\}\cup N_{G}(v)$ is a hitting set of $G$. In particular, $h(G)\le\delta(G)+1$.
        \item[\textup{(3)}] If $G$ is a bipartite graph, then $h(G)\le 2$.
    \end{enumerate}
\end{prop}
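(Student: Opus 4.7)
The plan is to handle the three items independently; all are direct structural observations.

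For (1), since independence only constrains vertices within the same component, a set $I$ is a maximum independent set of $G$ if and only if $I \cap V(G_j)$ is a maximum independent set of $G_j$ for every $j$. Consequently $\alpha(G) = \sum_j \alpha(G_j)$, and any hitting set of a single component $G_i$ already meets the $G_i$-part of every maximum independent set of $G$. Taking the minimum over $i$ yields $h(G) \le \min_i h(G_i)$. For (2), I would fix a vertex $v$ and any maximum independent set $I$: if $v \notin I$ then by maximality the set $I \cup \{v\}$ is not independent, which forces some neighbor of $v$ to lie in $I$. Hence $\{v\} \cup N_G(v)$ is always a hitting set, and choosing $v$ of minimum degree yields the bound $h(G) \le \delta(G) + 1$.

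Part (3) is the only item requiring a small additional input. The plan is to use König's theorem, which gives $\alpha(G) = n - \nu(G)$ for bipartite $G$, where $\nu$ denotes the matching number. Assuming $G$ has at least one edge (else the claim is trivial), fix a maximum matching $M$ and pick any edge $uv \in M$; I would show that $\{u, v\}$ itself is a hitting set. The key step is verifying $\nu(G - \{u, v\}) = \nu(G) - 1$: the lower bound is witnessed by $M \setminus \{uv\}$, and the upper bound follows because any matching of size $\nu(G)$ in $G - \{u,v\}$ together with $uv$ would give a matching of size $\nu(G) + 1$ in $G$, contradicting maximality of $M$. Applying König's theorem to the (still bipartite) induced subgraph $G - \{u, v\}$ then gives $\alpha(G - \{u, v\}) = (n - 2) - (\nu(G) - 1) = \alpha(G) - 1$, so no maximum independent set of $G$ can avoid both $u$ and $v$, and $\{u, v\}$ hits all of them.

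There is really no obstacle here; the proposition is a collection of immediate consequences of the definitions, apart from part (3), which is a short application of König's theorem together with a one-line matching-exchange argument. No machinery beyond standard bipartite matching theory is required.
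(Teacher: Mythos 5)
Your treatment of (1) and (2) matches the paper's argument word for word in spirit. For (3), the underlying idea is the same — remove the two endpoints of a matching edge and show the independence number drops — but your execution is a bit cleaner: the paper first splits into cases, appealing to Hajnal's intersection observation when $|A|\neq|B|$ or when Hall's condition fails (so $h(G)=1$), and only invokes Hall's theorem to extract a perfect matching in the balanced, Hall-satisfying case. Your König--Gallai route ($\alpha(G)=n-\nu(G)$ for bipartite $G$) handles all cases in one stroke: picking any edge $uv$ of a maximum matching and observing $\nu(G-\{u,v\})=\nu(G)-1$ immediately yields $\alpha(G-\{u,v\})=\alpha(G)-1$, with no need to distinguish balanced from unbalanced bipartitions. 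Both proofs are short and correct; yours trades Hajnal's observation for Gallai's identity and avoids the case analysis, which some readers will find tidier.
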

\begin{proof}[Proof of~\cref{obs:Basic}]
For (1), note that every $I\in\MIS$ is a disjoint union of $I_{i}\in\mathcal{I}_{\textup{max}}(G_{i})$ for $i\in [m]$, therefore, to hit all maximum independent sets in $G$, it suffices to hit all maximum independent sets in a connected component. For (2), if there is some $I\in\MIS$ such that $I\cap N_{G}(v)=\emptyset$, then $v\in I$. For (3), let $G=A\cup B$, if $|A|\neq |B|$, then $\alpha(G)>\frac{|G|}{2}$, which yields $h(G)=1$ by Hajnal's observation. Assume $|A|=|B|$, if there is a subset $S\subseteq A$ such that $|S|>|N(S)|$, then $\alpha(G)\ge |S\cup B\setminus N(S)|>\frac{|G|}{2}$, which implies $h(G)=1$. Otherwise, by Hall's theorem~\cite{1935Hall}, there is a perfect matching between $A$ and $B$. Take one edge $ab$ from this matching and put two endpoints $a$ and $b$ into the hitting set. It is easy to check that there is no independent set of size $\frac{|G|}{2}$ in the induced subgraph on subset $V(G)\setminus\{a,b\}$.
\end{proof}

\subsection{Graphs with VC-dimension $1$: Proof of~\cref{thm:VC1}}
Let $G$ be a graph with VC-dimension $1$ and $\alpha(G)=\alpha$. 
\begin{lemma}\label{lemma:VC1P5free}
    Let $G$ be a graph with VC-dimension one, then $G$ is induced $P_{5}$-free.
\end{lemma}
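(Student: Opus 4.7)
The plan is to prove the contrapositive: if $G$ contains an induced $P_5$, then the VC-dimension of $G$ is at least $2$. Suppose therefore that $v_1 v_2 v_3 v_4 v_5$ is an induced copy of $P_5$ in $G$. I would exhibit a shattered pair by taking $S = \{v_2, v_4\}$ and verifying that each of the four subsets of $S$ arises as a trace $N_G(w) \cap S$ for some $w \in V(G)$.

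The bookkeeping for the four traces uses only the edges and non-edges forced by the induced $P_5$. The full set $\{v_2, v_4\}$ is realized by $w = v_3$, which is adjacent to both $v_2$ and $v_4$ via the path edges $v_2 v_3$ and $v_3 v_4$. The singleton $\{v_2\}$ is realized by $w = v_1$, using the edge $v_1 v_2$ and the non-edge $v_1 v_4$ guaranteed by the induced property; symmetrically, $\{v_4\}$ is realized by $w = v_5$. The empty trace is realized by $w = v_2$ itself, since $v_2 \notin N_G(v_2)$ (no loops) and $v_2 v_4 \notin E(G)$ (again by induced-ness). Extra edges that $G$ may carry outside the path do not interfere with any of these four checks, because each check only depends on whether the chosen witness is adjacent to $v_2$ or to $v_4$, and these adjacencies are completely pinned down by the induced $P_5$.

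The argument is essentially a four-case inspection and presents no real obstacle. The only micro-subtlety worth flagging is the choice of witness for the empty trace: one might instinctively look for a sixth vertex of $G$ whose neighborhood avoids both $v_2$ and $v_4$, but this is unnecessary (and could even fail, e.g.\ when $G = P_5$ itself). Using $v_2$ in that role works cleanly, because a vertex is never in its own neighborhood, and then $v_2 v_4 \notin E(G)$ supplies the other half. Concluding, the set $S$ is shattered, contradicting the assumption that the VC-dimension of $G$ equals $1$, so $G$ must be induced $P_5$-free.
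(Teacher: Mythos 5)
Your proof is correct and matches the paper's argument exactly: the paper also takes $\{x_2,x_4\}$ as the shattered pair in an induced $P_5$ and derives a contradiction with VC-dimension one. Your explicit four-case verification of the traces, including the observation that $v_2$ itself witnesses the empty trace, simply fills in the routine details the paper leaves implicit.
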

\begin{proof}[Proof of~\cref{lemma:VC1P5free}]
    Suppose there is an induced copy of $P_{5}:=x_{1}x_{2}x_{3}x_{4}x_{5}$, then $\{x_{2},x_{4}\}$ is shattered, a contradiction.
\end{proof}
Moreover, we can also obtain the following simple structural property.
\begin{lemma}\label{claim:triangleP5}
Let $G$ be a graph with VC-dimension one. If there is a triangle in $G$ with vertices $a_{1},a_{2},a_{3}$, then every vertex in $V(G)\setminus\{a_{1},a_{2},a_{3}\}$ is adjacent to at least two vertices in $\{a_{1},a_{2},a_{3}\}$.
\end{lemma}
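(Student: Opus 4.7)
The plan is to prove \cref{claim:triangleP5} by contraposition: I will show that if some vertex $v \notin \{a_1, a_2, a_3\}$ has at most one neighbor in the triangle, then some $2$-element subset of $V(G)$ is shattered, contradicting the hypothesis that $G$ has VC-dimension $1$.

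Suppose towards contradiction that there exists $v \in V(G) \setminus \{a_1, a_2, a_3\}$ with $|N_G(v) \cap \{a_1, a_2, a_3\}| \le 1$. By the pigeonhole principle, there are two indices, say $i, j$, such that $v$ is non-adjacent to both $a_i$ and $a_j$; let $k$ be the remaining index. I will show that the set $S := \{a_i, a_j\}$ is shattered by the neighborhoods of four judiciously chosen witnesses. The witnesses are exactly the vertices of the triangle together with $v$:
\begin{itemize}
    \item $N_G(v) \cap S = \emptyset$, since $v$ is non-adjacent to both $a_i$ and $a_j$;
    \item $N_G(a_i) \cap S = \{a_j\}$, since $a_i a_j$ is a triangle edge and $a_i \notin N_G(a_i)$;
    \item $N_G(a_j) \cap S = \{a_i\}$, symmetrically;
    \item $N_G(a_k) \cap S = \{a_i, a_j\}$, since $a_k$ is adjacent to both other triangle vertices.
\end{itemize}
Hence every subset of $S$ is realized as $N_G(u) \cap S$ for some $u \in V(G)$, so $S$ is shattered. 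This contradicts the assumption that the VC-dimension of $G$ equals $1$, completing the argument.

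There is no genuine obstacle here; the only step requiring care is handling both cases ``$v$ has zero neighbors in the triangle'' and ``$v$ has exactly one neighbor in the triangle'' uniformly, which is precisely what the pigeonhole selection of the pair $\{i, j\}$ accomplishes. The argument is really just the observation that the triangle itself already contributes three of the four subsets of $S$ as neighborhood-traces, so a single extra vertex missing both of $a_i$ and $a_j$ is enough to complete a shattering.
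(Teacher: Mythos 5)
Your proof is correct and follows essentially the same route as the paper: the paper also shatters a two-element subset of the triangle using the triangle vertices together with $v$ as witnesses, splitting into the case where $v$ has zero neighbors in the triangle (shatter $\{a_1,a_2\}$) and the case where it has exactly one (shatter the other two). Your pigeonhole selection of the non-neighbors $\{a_i,a_j\}$ merely merges those two cases into a single uniform argument, which is a clean presentational improvement but not a different idea.
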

\begin{proof}[Proof of~\cref{claim:triangleP5}]
    If a vertex $v\in V(G)\setminus\{a_{1},a_{2},a_{3}\}$ is not adjacent to any vertex in $\{a_{1},a_{2},a_{3}\}$, then the set $\{a_1, a_2\}$ is shattered. Moreover, if $v$ is adjacent to exactly one of $\{a_{1},a_{2},a_{3}\}$, by symmetry, assume $va_{1}\in E(G)$, then $\{a_{2},a_{3}\}$ is shattered.
\end{proof}
 We will also take advantage of the structural property of triangle-free graphs without induced $P_{5}$, which was proven by Sumner~\cite{1981P5Trainglefree} in 1981.
\begin{theorem}[\cite{1981P5Trainglefree}]\label{thm:P5K3free}
    Let $G$ be an induced $P_{5}$-free and triangle-free graph, then each connected component of $G$ is either a bipartite graph, or a copy of $C_{5}[\cdot]$.
\end{theorem}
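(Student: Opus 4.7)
The plan is to show, for a connected graph $G$ that is triangle-free and induced $P_5$-free, that either $G$ is bipartite or $G$ is isomorphic to a blowup of $C_5$. The strategy is to locate a single induced $C_5$ and then prove that every other vertex fits into one of five ``blowup classes'' dictated by this $C_5$.

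First, if $G$ is not bipartite it contains an induced odd cycle. Triangle-freeness forces the shortest such cycle to have length at least $5$, while any induced cycle of length $\geq 7$ already contains an induced $P_5$ as a sub-path. Hence $G$ must contain an induced copy of $C_5$, say $C = v_1 v_2 v_3 v_4 v_5$ (indices read mod $5$).

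Next, for each $u \in V(G) \setminus V(C)$, triangle-freeness implies $N(u) \cap V(C)$ is an independent set in $C$, so it is either empty, a singleton, or a non-adjacent pair $\{v_{i-1}, v_{i+1}\}$. The key claim is that only the last option occurs. The singleton case yields an induced $P_5$ immediately: if $u$'s unique neighbor in $C$ is $v_1$, then $u v_1 v_2 v_3 v_4$ is induced. The empty case is handled via a shortest $u$-to-$C$ path $u = w_0 w_1 \cdots w_k = v_j$; when $k \geq 4$ the prefix $w_0 w_1 w_2 w_3 w_4$ itself is an induced $P_5$ by the shortest-path property, while for $k \in \{2, 3\}$ a short sub-case analysis applies, using that $w_{k-1}$ must have at least two (non-adjacent) neighbors in $C$ once the singleton case has been excluded, so one can exhibit an explicit induced $P_5$ such as $u, w_1, v_1, v_5, v_4$ or $w_0, w_1, w_2, v_1, v_2$. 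This ``zero-neighbor'' step is the main technical nuisance of the argument but reduces to a handful of concrete configurations.

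Finally, partition $V(G)$ into classes $A_1, \ldots, A_5$, placing $v \in V(G)$ in $A_i$ when $N(v) \cap V(C) = \{v_{i-1}, v_{i+1}\}$; in particular $v_i \in A_i$. Three facts then finish the argument: (i) each $A_i$ is independent, for any edge inside $A_i$ together with $v_{i-1}$ would form a triangle; (ii) no edge joins $A_i$ to $A_{i+2}$, else such an edge together with $v_{i+1}$ would form a triangle; (iii) every pair $u \in A_i$, $w \in A_{i+1}$ is adjacent, since otherwise $u, v_{i-1}, v_i, w, v_{i+2}$ is (routinely checked to be) an induced $P_5$. These three properties identify $G$ with the blowup $C_5[\cdot]$ on parts $A_1, \ldots, A_5$, and applying the same dichotomy to each connected component completes the proof.
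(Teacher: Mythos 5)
The paper does not prove this theorem; it merely cites Sumner~\cite{1981P5Trainglefree}, so there is no in-paper argument to compare against. Assessing your blind proof on its own: it is correct, and it follows the standard route for this result. The logical skeleton is sound: a non-bipartite triangle-free $P_5$-free graph must contain an induced $C_5$ (no $C_3$ and an induced $C_{\ge 7}$ would carry an induced $P_5$); any other vertex's trace on this $C_5$ is an independent set of $C_5$, hence has size at most two; the singleton case gives an immediate induced $P_5$; the empty case is reduced, via a shortest path to $V(C)$ whose internal vertices avoid $C$ and whose last internal vertex therefore has exactly two non-adjacent neighbours on $C$, to the explicit $P_5$'s you exhibit for $k\in\{2,3\}$ and to the trivial prefix $P_5$ for $k\ge 4$. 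The classification into parts $A_1,\dots,A_5$ is then finished exactly as you say: triangle-freeness gives independence within a part and non-adjacency between parts at $C_5$-distance two, while any missing edge between consecutive parts produces the induced $P_5$ $u,v_{i-1},v_i,w,v_{i+2}$ (valid once you note, as you implicitly do, that $u$ or $w$ lying on $C$ makes the edge automatic, so both may be assumed off $C$ and the five vertices are distinct). Two small points worth making explicit in a final write-up: that the singleton-case exclusion applies to \emph{all} vertices off $C$, which is what licenses the statement about $w_{k-1}$, and that $w_{k-1}\notin V(C)$ because otherwise the path to $C$ would be shorter. Neither is a gap, just a sentence each.
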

Suppose that $G$ is triangle-free, then by~\cref{thm:P5K3free}, if $G$ is bipartite, then~\cref{obs:Basic} yields that $h(G)\le 2$, otherwise $G=C_{5}[\cdot]$ with parts $F_{1},F_{2},F_{3},F_{4},F_{5}$, and one can see $\alpha(G)\ge\frac{2n}{5}$. We further notice that if $c\neq\frac{2}{5}$, then $h(G)\le 2$. To see this, if $c\neq\frac{2}{5}$, then there exists some pair $(i,j)$ with $|i-j|\not\equiv 1\pmod{5}$ such that $|F_{i}\cup F_{j}|<\frac{2n}{5}$. Without loss of generality, assume $|F_{1}\cup F_{3}|<\frac{2n}{5}$, then we can pick arbitrary one vertex $u\in F_{4}$ and $v\in F_{5}$ respectively. Obviously for every $I\in\MIS$, $I\cap\{u,v\}\neq\emptyset$, which yields $h(G)\le 2$. when $c=\frac{2}{5}$, if $|F_{i}|=\frac{n}{5}$ for each $i\in [5]$, then $h(G)=3$, otherwise, we can see $h(G)\le 2$ via the identical argument as above. 

Next we can assume that there is a triangle with vertex set $\{a_{1},a_{2},a_{3}\}$. Take an arbitrary $I:=\{v_{1},v_{2},\ldots,v_{\alpha}\}\in\MIS$. Suppose that $I\cap\{a_{1},a_{2},a_{3}\}=\emptyset$, then by~\cref{claim:triangleP5}, $v_{1}$ must be adjacent to at least two vertices in $\{a_{1},a_{2},a_{3}\}$. Without loss of generality, we can assume that $v_{1}a_{1}\in E(G)$ and $v_{1}a_{2}\in E(G)$. As $v_{1},a_{1},a_{2}$ together form a copy of triangle, then for every vertex $v_{j}\in I\setminus\{v_{1}\}$, $v_{j}$ must be adjacent to both of $a_{1}$ and $a_{2}$ by~\cref{claim:triangleP5}, which indicates that all of vertices in $I$ are adjacent to both of $a_{1}$ and $a_{2}$. Moreover, as $a_{3}\notin I$, then $a_{3}$ must be adjacent to some vertex in $I$, say $a_{3}v_{k}\in E(G)$, which implies that $a_{1},a_{3},v_{k}$ form a copy of triangle. Again by~\cref{claim:triangleP5}, any vertex $v_{\ell}\in I\setminus\{v_{k}\}$ should be adjacent to $a_{3}$. Now let $v\in V(G)\setminus(I\cup\{a_{1},a_{2},a_{3}\})$. First, $v$ must be adjacent to at least two vertices in $\{a_{1},a_{2},a_{3}\}$, and we assume $a_{1}v,a_{2}v\in E(G)$ without loss of generality. As $v\notin I$, there must be some vertex $v_{q}$ such that $v_{q}v\in E(G)$, which implies that $a_{1}v_{q}v$ forms a copy of triangle, then by~\cref{claim:triangleP5}, every vertex in $I$ should be adjacent to the vertex $v$.

One can also apply the same argument to show that if $|I\cap\{a_{1},a_{2},a_{3}\}|=1$, every vertex $v\in V(G)\setminus I$ is adjacent to all vertices in $I$. We omit the repeated details here.

Therefore, if $G$ contains a copy of triangle as a subgraph, then all of the maximum independent sets of $G$ must be disjoint, which implies that $h(G)\le\frac{n}{\alpha}$ in this case. This finishes the proof.

\section{A general framework for locally sparse graphs}\label{sec:LocalSparse}

In this section, we provide a general framework to find a relatively small hitting set. This framework performs well for those graphs with linear independence number and certain locally sparse property. As examples, here we show that the even-hole-free graphs and disk graphs satisfy the property.

\subsection{A general framework}

\begin{theorem}\label{thm:General:Sparse}
    Let $c>0$, $s\in\mathbb{N}$ and $G$ be an $n$-vertex graph with $\alpha(G)=cn$. If for any $I_{1},I_{2}\in\MIS$,  $e(G[I_{1}\triangle I_{2}])\le s\cdot|I_{1}\triangle I_{2}|$, then $h(G)\le \frac{5sn}{\log{n}}$.
\end{theorem}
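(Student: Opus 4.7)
Plan:

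My plan is to construct the hitting set iteratively, selecting at each step a vertex (or small batch) that eliminates a large fraction of the surviving maximum independent sets. Maintain a collection $\mathcal{I}\subseteq\MIS$ of alive MIS, starting with $\mathcal{I}=\MIS$. The target budget $|T|\le 5sn/\log n$ is realized as soon as each vertex added to $T$ reduces $\log|\mathcal{I}|$ by roughly $(\log n)/(5s)$ on average, since $\log|\MIS|\le n$ trivially. A vanilla averaging step already shows that $\sum_{v\in V(G)}|\{I\in\mathcal{I}:v\in I\}|=cn|\mathcal{I}|$, so some vertex lies in at least a $c$-fraction of $\mathcal{I}$; but this alone only yields $h(G)\le\log|\MIS|/\log(1/(1-c))=O(n)$, which is insufficient.

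To use the sparsity, fix any $I_0\in\mathcal{I}$ and for each $I\in\mathcal{I}$ write $A=I_0\setminus I$, $B=I\setminus I_0$, and $k=|A|=|B|$. Maximality forces every vertex of $B$ to have a neighbor in $I_0$ which, since $I$ is independent, must lie in $A$; symmetrically every vertex of $A$ has a neighbor in $B$. Thus $G[A\cup B]$ is bipartite with minimum degree at least $1$ on both sides and, by hypothesis, at most $2sk$ edges. A swap argument (replacing any $B'\subseteq B$ violating Hall by $N(B')\cap I_0$ in $I_0$ produces a strictly larger independent set) shows Hall's condition holds, yielding a perfect matching between $A$ and $B$ inside $G[A\cup B]$. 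Furthermore $A=N(B)\cap I_0$, so $I$ is uniquely determined by $B$ alone.

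The crux is then to translate these structural facts into a much stronger per-step reduction than the averaging factor $1-c$. There are two plausible routes: (i) an encoding/container argument showing that, relative to $I_0$, each $B$ can be described using $O(s)$ bits per vertex via the sparse bipartite matching (for instance, by iteratively peeling low-degree vertices of $G[A\cup B]$ and recording a small amount of local matching information), which effectively compresses the collection of alive MIS and forces some vertex of $I_0$ to lie in a $\bigl(1-n^{-\Omega(1/s)}\bigr)$-fraction of $\mathcal{I}$; or (ii) adaptively selecting a small structural batch — a clique or local bottleneck within $G$ identified via the sparse bipartite subgraph — that is guaranteed to hit all alive MIS at once.

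The principal obstacle is making one of these routes rigorous inside the tight budget $5sn/\log n$. The sparsity bound only directly controls the local bipartite structure relative to a fixed $I_0$, and converting it to a global statement of the form ``some vertex is in almost every alive MIS'' is delicate. Extremal examples such as a disjoint union of triangles (where no vertex lies in more than $1/3$ of MIS, yet $h(G)=3$ via any single triangle) indicate that the correct per-step strategy cannot rely solely on averaging but must occasionally add an entire structural batch at once. Identifying the right structural object, combining it with the matching in $G[A\cup B]$, and carefully optimizing constants to reach the explicit factor $5$, will be the heart of the argument.
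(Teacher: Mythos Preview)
Your structural observations are correct (the bipartite graph on $I_0\triangle I$ has a perfect matching by the Hall-swap argument, and $A=N(B)\cap I_0$ so $I$ is determined by $B$), but the proposal stops exactly where the real work begins: you explicitly say that ``making one of these routes rigorous'' is the principal obstacle, and neither route (i) nor (ii) is carried out. Route (i) as stated does not give what you need: an $O(s)$-bits-per-vertex encoding of $B$ only yields $\log|\MIS|\le O(sn)$, which is no better than the trivial $\log|\MIS|\le n$ and still leaves the greedy argument at $h(G)=O(sn)$; there is no mechanism here that forces a single vertex of $I_0$ to lie in a $(1-n^{-\Omega(1/s)})$-fraction of the alive MIS. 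Route (ii) is a hope, not a construction. So as written this is a plan with an acknowledged gap, not a proof.

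The paper's argument is quite different in spirit and does not proceed by iteratively thinning $\MIS$. Instead it greedily builds a short list $I_1,\ldots,I_t$ of maximum independent sets, each required to contribute at least roughly $\frac{\delta n}{k}$ new vertices to the running union $W=\bigcup_{i\le k}I_i$; since $|W|\le n$ this forces $t=n^{o(1)}$. Now the sparsity hypothesis, summed over $i=2,\ldots,t$, bounds the total number of edges from $I_1$ into $W$ by $O(stn)=n^{1+o(1)}$, so one can choose a small set $C\subseteq I_1$ (of size $\approx\delta n/(st)$) with $|N_{G[W]}(C)|\le\delta n/2$. The hitting set is $T=C\cup N_{G[W]}(C)$, of size at most $\delta n$. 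Any $I\in\MIS$ missing $T$ would have $(I\cap W)\cup C$ independent, and by maximality of $t$ we have $|I\setminus W|$ small, forcing $|(I\cap W)\cup C|>\alpha(G)$, a contradiction. The key idea you are missing is this ``cover $\MIS$ by a sublinear-length list of MIS and then exploit sparsity \emph{inside the cover}'' step, which sidesteps entirely the need for a per-step $\log n$ gain in a greedy deletion scheme.
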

\begin{proof}[Proof of~\cref{thm:General:Sparse}]
Let $\delta=\frac{a}{\log{n}}$ where $a=5s$ is a constant and let $b=0.98a$. Select a maximal sequence $I_1,\ldots,I_t\in\MIS$ such that for each $k\in[t]$, $|I_k\setminus \big(\bigcup_{i=1}^{k-1}I_{i}\big)|>\frac{\delta n}{bk}$. Let $W=\bigcup_{i=1}^{t}I_{i}$ and $G'=G[W]$. Then 
$$n\ge |W|\ge cn+\sum\limits_{i=2}^{t}\frac{\delta n}{bi}=cn+(\log{t}-O(1))\frac{\delta n}{b},$$ 
and so $t\le e^{b/\delta+O(1)}<\frac{n^{0.99}}{2s}$. Then, by the assumption, we have
\begin{equation}\label{claim:SparseBipartite}
    e_G(I_{1},\bigcup_{i=2}^{t}I_{i})\le \sum_{i=2}^{t}e(G[I_{1}\triangle I_{i}])\le 2s(t-1)cn<cn^{1.99}.
\end{equation}

Now suppose that $G'=G$. By~\cref{claim:SparseBipartite} and the pigeonhole principle, there exists a vertex $v\in I_{1}$ with $d_{G}(v)<n^{0.99}$ and we are done by~\cref{obs:Basic}.

It remains to consider the case that $G'\neq G$. Again by~\cref{claim:SparseBipartite}, we can pick a subset $C\subseteq I_{1}$ with $|C|=\frac{\delta n}{4s(t-1)}$ such that $e_G(C,\bigcup_{i=2}^{t}I_{i})\le 2s(t-1)|C|\le\frac{\delta n}{2}$. Note that 
$$ |C| + |N_{G'}(C)|\le |C|+ e_G(C,\bigcup_{i=2}^{t}I_{i})\le\delta n=\frac{5sn}{\log n}.$$ 
It suffices to show that $C\cup N_{G'}(C)$ hits all maximum independent sets in $G$. Fix an arbitrary $I\in\MIS$. The maximality of $t$ infers that  $|I\setminus W|\le \frac{\delta n}{b(t+1)}$. Suppose to the contrary that $I\cap (C\cup N_{G'}(C))=\emptyset$. Then, $(I\cap W)\cup C$ is an independent set of size $|C|+|I|-|I\setminus W|\ge\frac{\delta n}{4s(t-1)}+(cn-\frac{\delta n}{b(t+1)})>cn$ as $b=0.99a>4s$, contradicting to $\alpha(G)=cn$. 
\end{proof}
\subsection{Even-hole-free graphs: Proof of~\cref{thm:evenHoleFree}}
 A well-known result states that the number of edges in an acyclic graph is less than the number of vertices. Based on~\cref{thm:General:Sparse}, to prove~\cref{thm:evenHoleFree}, it suffices to show the following lemma.

  \begin{lemma}\label{claim:NoCycle}
      Let $G$ be an even-hole-free graph. Then, for any $I_{1},I_{2}\in \MIS$, $G[I_{1}\triangle I_{2}]$ is acyclic.
  \end{lemma}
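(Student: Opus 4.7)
The plan is to exploit two facts: first, the induced subgraph on the symmetric difference of two independent sets is automatically bipartite, and second, the shortest cycle in any graph is always an induced cycle. Combining these with the even-hole-free hypothesis gives a contradiction.

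More concretely, let $A = I_1 \setminus I_2$ and $B = I_2 \setminus I_1$, so $I_1 \triangle I_2 = A \cup B$. Since $I_1$ is independent, no edge of $G$ lies inside $A$; since $I_2$ is independent, no edge lies inside $B$. Hence every edge of $G[I_1 \triangle I_2]$ goes between $A$ and $B$, i.e., $G[I_1 \triangle I_2]$ is bipartite. The first step of the proof is just to record this observation.

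Next, suppose for contradiction that $G[I_1 \triangle I_2]$ contains a cycle, and let $C$ be a shortest such cycle. A shortest cycle in any graph must be chordless (a chord would yield a strictly shorter cycle), so $C$ is an induced cycle of $G[I_1 \triangle I_2]$. Since $G[I_1 \triangle I_2]$ is itself an induced subgraph of $G$, the cycle $C$ is also induced in $G$. Because $G[I_1 \triangle I_2]$ is bipartite, $C$ has even length, and of course length at least $4$. Thus $C$ is an even hole in $G$, contradicting the assumption that $G$ is even-hole-free. This gives the lemma.

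I do not foresee a real obstacle here; the argument is a short structural observation. The only place where one should be a little careful is to justify that the cycle found inside $G[I_1 \triangle I_2]$ is induced in $G$ (not just in the bipartite subgraph), but this follows automatically since $G[I_1 \triangle I_2]$ is already an induced subgraph of $G$ and the shortest cycle in any graph is chordless.
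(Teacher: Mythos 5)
Your proposal is correct and follows essentially the same approach as the paper: both take a minimal (shortest) cycle in $G[I_1\triangle I_2]$, observe it is induced in $G$, and use bipartiteness of $G[I_1\triangle I_2]$ to conclude it is an even hole, a contradiction. You merely spell out the bipartiteness observation and the chordlessness of a shortest cycle in a bit more detail than the paper does.
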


\begin{proof}[Proof of~\cref{claim:NoCycle}]
    Consider a minimal cycle $C$ in $G'=G[I_{1}\triangle I_{2}]$ if exists. By minimality, $C$ is an induced cycle in $G$. As $G'$ is bipartite, $C$ is an induced even cycle, a contradiction. Thus no such cycle exists and $G'$ is acyclic as desired.  
\end{proof} 

\subsection{Disk graphs: Proof of~\cref{thm:Disk}}
Similarly as above, based on~\cref{thm:General:Sparse}, to prove~\cref{thm:Disk}, it suffices to show the following lemma.

\begin{lemma}\label{lemma:SparseDisk}
      Let $G$ be an $n$-vertex disk graph with $\alpha(G)=cn$ for some $c>0$. For any $I_{1},I_{2}\in\MIS$, $e(G[I_{1}\triangle I_{2}])\le 5|I_{1}\triangle I_{2}|$.
  \end{lemma}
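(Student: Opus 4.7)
The plan is to exploit the bipartite structure of $G[I_1\triangle I_2]$ together with a kissing-type geometric estimate. Write $A := I_1\setminus I_2$ and $B := I_2\setminus I_1$; since $I_1$ and $I_2$ are each independent sets in $G$, the subgraph $H := G[I_1\triangle I_2]$ is bipartite with parts $A$ and $B$. In the disk representation, the disks corresponding to vertices of $A$ are pairwise disjoint, and likewise for $B$. I will deduce the edge bound from a $5$-degeneracy argument applied to $H$ and any of its induced subgraphs.

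The main ingredient is the following geometric lemma: \emph{if $d$ is a disk of radius $r$ and $b_1,\dots,b_k$ are pairwise disjoint disks, each of radius at least $r$, all intersecting $d$, then $k\le 5$}. To prove it, place the center $c_0$ of $d$ at the origin and let $c_i$ be the center and $r_i\ge r$ the radius of $b_i$, so that $|c_0c_i|\le r+r_i$ and $|c_ic_j|>r_i+r_j$ for $i\ne j$. Writing $\theta_{ij}$ for the angle $\angle c_ic_0c_j$, the law of cosines gives
\[
\cos\theta_{ij}=\frac{|c_0c_i|^{2}+|c_0c_j|^{2}-|c_ic_j|^{2}}{2|c_0c_i||c_0c_j|}.
\]
If $\cos\theta_{ij}\le 0$ we are done, and otherwise a short optimization (checking that the quadratic form $a^{2}-ab+b^{2}$ on the box $[0,r+r_i]\times[0,r+r_j]$ is bounded above by $(r_i+r_j)^{2}$, with equality only when $r_i=r_j=r$ at the far corner) shows that the strict inequality $|c_ic_j|>r_i+r_j$ yields $\cos\theta_{ij}<\tfrac12$, i.e.\ $\theta_{ij}>60^\circ$. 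Since pairwise angular separations exceeding $60^\circ$ at a point in the plane force at most $5$ directions, we get $k\le 5$.

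With the lemma in hand, I argue $5$-degeneracy of $H$ (and of every induced subgraph $H'$ of $H$). Let $d$ be a disk of minimum radius among those indexing $V(H')$; say $d\in A$. Its neighbors in $H'$ lie in $B$, are pairwise disjoint, and every one of them has radius at least that of $d$; moreover each of them intersects $d$. The geometric lemma gives $\deg_{H'}(d)\le 5$. The symmetric case $d\in B$ is identical. Removing $d$ and iterating shows $e(H)\le 5|V(H)|=5|I_1\triangle I_2|$, which is precisely the required bound; combined with \cref{thm:General:Sparse} applied with $s=5$ this yields $h(G)\le 25n/\log n$ as in \cref{thm:Disk}.

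The only non-routine step is the geometric lemma, and within it the verification that in the worst case $r_i=r_j=r$ the configuration is exactly the tight one (six mutually tangent unit disks surrounding a unit disk). The strictness of disk-graph independence, namely $|c_ic_j|>r_i+r_j$ rather than $\ge$, is what rules out the sixth disk and gives the clean bound $5$. I do not expect any further obstacle; the only mild care needed is to confirm that the extremal calculation above really does attain its maximum at the corner $(r+r_i,r+r_j)$ of the allowed box, which follows from convexity of $a^{2}-ab+b^{2}$ in each variable.
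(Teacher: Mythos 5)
Your proof is correct and takes essentially the same route as the paper: the same kissing-type geometric lemma (at most five pairwise disjoint disks of radius at least $r$ can meet a disk of radius $r$, via the angle-at-the-center argument), followed by charging each edge of the bipartite graph $G[I_1\triangle I_2]$ to its smaller-radius endpoint---your degeneracy formulation is just a repackaging of the paper's direct sum $\sum d^{\ge}$. The only minor stylistic difference is in the angle bound: the paper shows $\angle\boldsymbol{y}_i\boldsymbol{x}\boldsymbol{y}_j>\pi/3$ simply by observing that $|\boldsymbol{y}_i-\boldsymbol{y}_j|$ is strictly the longest side of the triangle, avoiding the box-optimization of $a^2-ab+b^2$ you carry out.
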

\begin{proof}[Proof of~\cref{lemma:SparseDisk}]
Firstly, we prove the following claim using the geometric intuition that a disk cannot intersect simultaneously with too many non-intersecting larger disks. For a vertex $v$, we denote $\rho(v)$ the radius of the disk corresponding to $v$.
\begin{claim}\label{claim:SmallestCircle}
    Let $v\in V(G)$ be a vertex with radius $\rho(v)$, then there is no independent set $I$ of size $6$ in the induced subgraph $G[N_{G}(v)]$ such that every vertex $u$ in $I$ satisfies $\rho(u)\ge\rho(v)$.
\end{claim}
\begin{poc}
    Suppose there are seven disks $A,B_{1},B_{2},\ldots,B_{6}$ with centers $\boldsymbol{x},\boldsymbol{y}_{1},\boldsymbol{y}_{2},\ldots,\boldsymbol{y}_{6}$ and radii $r_{A},r_{1},r_{2},\ldots,r_{6}$ such that for all $i\in[6]$, $r_A\le r_i$, $A$ intersects $B_{i}$ and $B_{i}$ are pairwise disjoint. Then, for any $j\neq i$, $|\boldsymbol{x}-\boldsymbol{y}_{i}|\le r_{A}+r_{i}\le r_{j}+r_{i}<|\boldsymbol{y}_{i}-\boldsymbol{y}_{j}|$ and  $|\boldsymbol{x}-\boldsymbol{y}_{j}|\le r_{A}+r_{j}\le r_{i}+r_{j}<|\boldsymbol{y}_{i}-\boldsymbol{y}_{j}|$. Therefore, the angle $\angle\boldsymbol{y}_{i}\boldsymbol{x}\boldsymbol{y}_{j}$ is greater than $\frac{\pi}{3}$ for any pair of $1\le i<j\le 6$. This yields $2\pi<\angle\boldsymbol{y}_{1}\boldsymbol{x}\boldsymbol{y}_{2}+\cdots+\angle\boldsymbol{y}_{5}\boldsymbol{x}\boldsymbol{y}_{6}+\angle\boldsymbol{y}_{6}\boldsymbol{x}\boldsymbol{y}_{1}=2\pi$, a contradiction.
    \end{poc}
For a subset $S\subseteq V(G)$, let $d_{S}^{\ge}(u)$ be the number of neighbors $w\in N_{S}(u)$ such that $\rho(w)\ge\rho(u)$. For any $I_1,I_2\in\MIS$, By~\cref{claim:SmallestCircle} we have 
\begin{equation*}
e(G[I_{1}\triangle I_{2}])\le \sum\limits_{u\in I_{1}}d^{\ge}_{I_{2}\setminus I_{1}}(u)+\sum\limits_{v\in I_{2}}d^{\ge}_{I_{1}\setminus I_{2}}(v)\le 5|I_{1}\triangle I_{2}|.    
\end{equation*} 
\end{proof}

\subsection{Proof of~\cref{thm:11-simDisk}}
We prove that disk graphs have an $11$-simplicial vertex, i.e.~the set of its neighbors is the union of $11$ cliques.

For a disk graph $G$ and $v\in V(G)$, denote by $D_G(v)$ the disk corresponding to $v$ and $\rho_G(v)$ the radius of $D_G(v)$. Let $v_{0}\in V(G)$ be a vertex with minimum radius $\rho_G(v_{0})$ among all vertices in $G$. By rescaling, we may assume $\rho(v_{0})=1$. We shall prove that $v_0$ is the desired $11$-simplicial vertex.

    For each neighbor $v\in N_{G}(v)$, define a new disk $D'(v)$ with radius 1 internally tangent to $D_G(v)$ such that the common point $A=D'(v)\cap D_G(v)$ lies on the line going through centers of $D_G(v_0)$ and $D_G(v)$ (see~\cref{fig:Auxiliary Disk}). As each new disk $D'(v)$ is contained in the original $D_G(v)$, for any $v,w\in N_G(v_0)$, if $D'(v)\cap D'(w)\neq\varnothing$, then $vw\in E(G)$. As depicted in~\cref{fig:L10vertices}, the family of all new disks $\C D=\{D'(v): v\in N_G(v_0)\}$, all of which intersect $D_G(v_0)$, can be pierced by at most 11 points, implying that $v_0$ is an 11-simplicial vertex in $G$.

\begin{figure}[ht]
\begin{minipage}[t]{0.48\textwidth}
\centering
\begin{tikzpicture}
    \draw[black, line width=1.5pt] (-1,0) circle (1.5cm) node [ below] {$D_{0}$};
    
    \draw[red, line width=1.5pt] (2.5,0) circle (2.5cm) ;
    
    \draw[blue, line width=1.5pt] (1.5,0) circle (1.5cm) node [ below] {$D'$};

\draw[dashed] (-1,0) -- (2.5,0);

\node at (-0.3,0.3) {$A$};

\node[red] at (5.5,0) {$D$};

\end{tikzpicture}
\caption{Create the disk $D'$ according to the disks $D_{0}$ and $D$}\label{fig:Auxiliary Disk}
\end{minipage}
\begin{minipage}[t]{0.48\textwidth}
\centering
\begin{tikzpicture}
    \draw[line width=2pt] (0,0) circle (2cm);
    
    \filldraw [blue] (0,0) circle (2pt);
    
    \foreach \angle in {0,36,...,324}
    {
        \filldraw [blue] (\angle:3cm) circle (2pt);
    }
\end{tikzpicture}
\caption{Any unit disk $D$ which intersects $D_{0}$ must contain one of the eleven points}\label{fig:L10vertices}
\end{minipage}
\end{figure}

\section{Circle graphs: Proof of~\cref{thm:CircleGraphs}}\label{sec:Circle}
\subsection{Warm-up: Simple geometric proof of weaker upper bound}
In this part, we first show that circle graphs satisfy~\cref{conj:BETConj}, with weaker quantitative bound. This proof mainly relies on simple geometric observations and combinatorial argument.
\begin{theorem}\label{thm:WeakCircle}
    Let $G$ be an $n$-vertex circle graph with $\alpha(G)=cn$ for $c>0$, then $h(G)\le \frac{4n\log\log{n}}{\log{n}}$.
\end{theorem}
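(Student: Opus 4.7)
The plan is to invoke the framework of~\cref{thm:General:Sparse} after establishing a sparsity bound on $G[I_1\triangle I_2]$ tailored to circle graphs. Specifically, I would show that for any two maximum independent sets $I_1,I_2$ in a circle graph $G$, the induced bipartite subgraph satisfies $e(G[I_1\triangle I_2])\le s\cdot |I_1\triangle I_2|$ for some $s=O(\log\log n)$. Combined with~\cref{thm:General:Sparse}, this yields $h(G)\le 5sn/\log n = O(n\log\log n/\log n)$, and with careful bookkeeping of the constants in the peeling argument one recovers the stated factor of $4$.

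For the sparsity step, I would exploit the geometric representation. Each $I_i$ corresponds to a non-crossing (laminar) family of chords on the circle, so the bipartite graph $G[I_1\triangle I_2]$ records exactly the crossings between two such laminar families. I would partition the chords in $I_1\triangle I_2$ by the length of their shorter arcs into $O(\log n)$ dyadic classes. Within a single length class, a chord can cross only chords with one endpoint in its short arc, which yields $O(1)$ crossings per chord on average. Across classes, I would use the maximality of $I_1$ and $I_2$ to rule out pathological dense crossing patterns: if a single chord were crossed by chords from too many classes on the opposite side, one could locally swap along a crossing configuration to enlarge either $I_1$ or $I_2$, contradicting maximality. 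A careful pigeonhole on the length classes, together with this non-augmentability constraint, reduces the naive $O(\log n)$ bound to $O(\log\log n)$.

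The main obstacle is precisely the sparsity claim. Bipartite circle graphs can be arbitrarily dense in general (for instance, $K_{m,m}$ is realized by $m$ horizontal and $m$ vertical pairwise-parallel chords), so the proof cannot rely on bipartiteness alone and must genuinely use that both $I_1$ and $I_2$ are \emph{maximum}. In contrast with the even-hole-free case~(\cref{claim:NoCycle}) where $G[I_1\triangle I_2]$ is outright acyclic, or the disk case~(\cref{lemma:SparseDisk}) where a one-line radius argument bounds degrees by a constant, here the sparsity input must be extracted from global maximality of the two laminar families. Turning the non-augmentability into the quantitative $\log\log n$ loss, via the layered pigeonhole on chord lengths, is the technical heart of the argument; the resulting bound is then fed directly into~\cref{thm:General:Sparse} to conclude.
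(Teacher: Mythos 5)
Your strategy has a fatal gap: the sparsity hypothesis of~\cref{thm:General:Sparse} simply does not hold for circle graphs, and maximality of $I_1,I_2$ does not rescue it. Consider $k$ nested near-vertical chords $a_1,\dots,a_k$ and $k$ nested near-horizontal chords $b_1,\dots,b_k$, all passing close to the center of the circle. Each family is laminar (hence pairwise non-crossing), and every $a_i$ crosses every $b_j$. Taking $V(G)$ to be exactly these $2k$ chords, the circle graph is $K_{k,k}$, and $I_1=\{a_1,\dots,a_k\}$, $I_2=\{b_1,\dots,b_k\}$ are both \emph{maximum} independent sets with $\alpha(G)=n/2$. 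Yet $e(G[I_1\triangle I_2])=k^2=\tfrac{n}{4}\cdot|I_1\triangle I_2|$, so the best possible $s$ is linear in $n$, not $O(\log\log n)$. Your proposed ``local swap to enlarge one of the sides'' does not materialize here: neither side can be enlarged, and still the crossing pattern is complete bipartite. The intermediate claim about dyadic arc-length classes fails for the same reason — all $2k$ chords in this example have essentially the same arc length and lie in one dyadic class, where crossings are quadratic rather than linear. So~\cref{thm:General:Sparse} cannot be the engine for circle graphs; it is essential in the even-hole-free and disk settings precisely because there a genuine local sparsity (acyclicity, or bounded depth by smallest radius) is available, and no analogue exists here.

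The paper's actual proof is structurally different and never controls $e(G[I_1\triangle I_2])$. Instead it defines, for each maximum independent set $I$ and each $v\in I$, a ``nesting color'' $k_I(v)$ counting how many members of $I$ the chord $C_v$ covers, proves that this color is independent of the choice of $I$ (\cref{claim:CircleGraphColoring}), and then shows either that the set $Z_I$ of color-$0$ vertices is large (in which case some $v\in Z_I$ has low degree and $\{v\}\cup N(v)$ is small), or that $|Z_I|\le M:=\log n/\log\log n$ for all $I$, in which case each maximum independent set must climb through every geometric window of colors $\bigl((2M)^j,(2M)^{j+1}\bigr]$, yielding $\Omega(\log n/\log\log n)$ pairwise-disjoint hitting sets and hence one of size $O(n\log\log n/\log n)$ by averaging. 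If you want to salvage a proof in the spirit of yours, you would need to design a hitting-set construction that \emph{exploits} rather than \emph{forbids} configurations like the nested $K_{k,k}$ above; the coloring-by-nesting-depth does exactly that.
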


\begin{proof}[Proof of~\cref{thm:WeakCircle}]
Consider a circle $O$, where the vertex set of graph $G$ consists of $n$ distinct chords in $O$. By perturbation, we may assume that none of these $n$ chords are diameters of $O$. For each $v\in V(G)$, let $C_v$ denote its corresponding chord and $A_v$ the minor arc associated to $C_v$.

For two non-crossing chords $C_a,C_b$, we say that $C_a$ \Yemph{covers} $C_b$ if $A_a\supseteq A_b$. Furthermore, we say that $C_a$ \Yemph{directly covers} $C_b$ if there does not exist any other chord $C_c$ with $A_a\supseteq A_c\supseteq A_b$ (see~\cref{fig:DefCover}). For $X\in\MIS$ and $x\in X$, let $K_{X}(x)$ be the set of vertices in $X$ which are covered by $x$ and let $k_{X}(x):=|K_{X}(x)|$. Obviously, $0\le k_{X}(x)\le cn-1$. Note also that for any $a,b\in X$, if $C_a$ covers $C_b$, then $k_{X}(a)>k_{X}(b)$.
Now, for each $I\in\MIS$ and $v \in I$, assign it the color $k_{I}(v)$.

\begin{figure}[htbp]
\centering
\begin{minipage}[t]{0.45\textwidth}
\centering
\begin{tikzpicture}
    \draw[line width=1.5pt] (0,0) circle (2cm);
    
    \draw[blue, line width=1pt] (-0.5,-1.9) -- (-0.5,1.9) node[midway, right] {A}; 
    
    \draw[green, line width=1pt] (-0.9,-1.75) -- (-0.9,1.75)node[midway, left] {B}; 
    
    \draw[red, line width=1pt] (-2,0) -- (-1.5,1.34)node[midway, right] {C}; 
    \draw[orange, line width=1pt] (-1.8,-0.8) -- (-1,-1.7)node[midway, right] {D}; 
\end{tikzpicture}

\caption{$A$ covers $B,C,D$, but $A$ does not directly cover $C,D$}\label{fig:DefCover}
\end{minipage}
\begin{minipage}[t]{0.45\textwidth}
\centering
\begin{tikzpicture}

  \draw[line width=1.5pt] (0,0) circle (2cm);
  
\draw[blue, line width=1pt] (-1,-1.7) -- (-0.5,1.9) ;

\draw[blue, line width=1pt] (2,0) -- (1.5,-1.34);

\draw[blue, line width=1pt] (1.8,0.8) -- (1,1.7);

\draw[red,dashed, line width=1pt] (-2,0) -- (1.8,-0.8);

\end{tikzpicture}
\caption{Any chord can cross at most two distinct chords in $Z_{I}$}\label{fig:IntersectTwo}
\end{minipage}
\end{figure}

\begin{claim}\label{claim:CircleGraphColoring}
    Each vertex in $V(G)$ receives at most one color.
\end{claim}
\begin{poc}
    Suppose that some vertex $u$ receives two distinct colors, say for $X,Y\in\MIS$, $k_{X}(u)=i< j=k_{Y}(u)$. Note that chords in $X\setminus K_{X}(u)$ are disjoint from those in $K_{Y}(u)$ as they lie in different sides of $C_u$. Therefore, $(X\setminus K_{X}(u))\cup K_{Y}(u)$ is also an independent set of size $cn-i+j>cn$, a contradiction.
\end{poc}

Next, for $I\in\MIS$, consider a vertex $v\in I$ with minimum $k_I(v)$. The minimality of $v$ implies that it does not cover any other vertex in $I$ and so $k_I(v)=0$. Thus, we have the following claim.
\begin{claim}\label{claim:ColorZero}
For each $I\in\MIS$, the set $Z_{I}=\{v\in I: k_{I}(v)=0\}\neq\emptyset$.
\end{claim}

Note that for any $I\in \MIS$ and any three pairwise non-crossing chords in $Z_{I}$, any other chord can cross at most two of them (see~\cref{fig:IntersectTwo}). Thus, the number of edges between $Z_{I}$ and $V(G)\setminus I$ is at most $2(1-c)n$ and there exists a vertex $v\in Z_{I}$ with $d_{G}(v)\le\frac{2(1-c)n}{|Z_{I}|}$. Then by~\cref{obs:Basic}, $h(G)\le \frac{2(1-c)n}{|Z_{I}|}+1$. Consequently, if there is $I$ with $|Z_{I}|> \frac{\log{n}}{\log\log{n}}$, then we are done. We may then assume that $|Z_{I}|\le M:=\frac{\log{n}}{\log\log{n}}$ for all $I\in\MIS$.

Let $R\subseteq V(G)$ denote the set of vertices that do not belong to any maximum independent set. By~\cref{claim:CircleGraphColoring}, every vertex in $V(G)\setminus R$ is assigned a unique color in $\{0,1,\ldots, cn-1\}$. Partition $V(G)\setminus R$ by colors $V(G)\setminus R=V_{0}\cup V_{1}\cup V_{2}\cup\cdots\cup V_{cn-1}$, where any vertex $v\in V_{i}$ satisfies $k_{I}(v)=i$ for any $I\in\MIS$. By~\cref{claim:ColorZero}, $V_0$ is a hitting set for $\MIS$. However, it is hard to bound the size $V_0$ directly. We will instead use multiple layers of $V_i$ instead.

Fix an $I\in \MIS$ with $|Z_{I}|\le M$.

\begin{claim}\label{claim:directlyCover}
    For any $v\in I$, $v$ directly covers at most $M$ other vertices in $I$.
\end{claim}
\begin{poc}
    Let $U\subseteq I$ be the set of vertices in $I$ that are directly covered by $v$. Then no vertex in $U$ covers another. This, together with the fact that $U\subseteq I$ is independent, infers that the minor arcs $A_u$, $u\in U$, are pairwise disjoint. Then, considering a vertex $u'$ with minimal color with $A_{u'}\subseteq A_u$ for each $u\in U$, we see that $|Z_{I}| \geq |U|$ finishing the proof.
\end{poc}

\begin{claim}\label{claim:LevelOne}
  $I\cap \bigcup_{i=1}^{M}V_{i}\neq\varnothing$ and $|I\cap V_i|\le M$ for all $i\in[cn-1]$. In particular, $h(G)\le |\bigcup_{i=1}^{M}V_{i}|$.
\end{claim}
\begin{poc}
  Consider a vertex $u \in I \setminus Z_{I}$ with minimum color. By the minimality, $u$ can cover only vertices in $Z_I$ and so $k_I(u)\le |Z_I|$.

  Fix $i\in [cn-1]$, then the minor arcs $A_u$, $u\in I\cap V_i$, are pairwise disjoint. Again, considering a vertex $u'$ with minimal color with $A_{u'}\subseteq A_u$ for each $u\in I\cap V_i$, we see that $|Z_{I}| \geq |I\cap V_i|$.
\end{poc}

Let $t$ be the largest positive integer such that $(2M)^{t+1}\le \lfloor \frac{cn}{M+1}\rfloor$. Then by~\cref{claim:LevelOne}, we see that $I$ contains vertices with color at least $(2M)^{t+1}+1$, that is, $I\setminus (\bigcup_{i=1}^{(2M)^{t+1}}V_i)\neq\varnothing$.

\begin{claim}\label{claim:HighLevel}
  For each $0\le j\le t$, $I\cap \bigcup_{k=(2M)^{j}+1}^{(2M)^{j+1}}V_{k}\neq\varnothing$. In particular, $h(G)\le |\bigcup_{k=(2M)^{j}+1}^{(2M)^{j+1}}V_{k}|$.
  \end{claim}
  
\begin{poc}
We prove it by induction on $0\le j\le t$. The base case $j=0$ follows from~\cref{claim:LevelOne}. For the inductive step, suppose the claim holds for all $0\le j'<j\le t$. Consider a vertex $w$ in $I\setminus (\bigcup_{i=1}^{(2M)^{j+1}}V_i)$ with minimum color. Then, as $w$ can directly cover at most $M$ vertices in $I$ by~\cref{claim:directlyCover}, there must exist a vertex $u$ covering at least $\frac{(2M)^{j+1}+1-M}{M}\ge(2M)^j+1$ vertices. On the other hand $u$ covers at most $(2M)^{j+1}$ vertices by the minimality of $w$. This finishes the proof.
\end{poc}

As each $\bigcup_{k=(2M)^{j}+1}^{(2M)^{j+1}}V_{k}$, $j=0,1,\ldots, t$, is a hitting set for $\MIS$, by averaging, there is a hitting set of size at most $\lfloor\frac{n}{t+1}\rfloor$. By the choice of $t$, $(2M)^{t+2}\ge  \frac{cn}{M+1}$ and so $t\ge \frac{\log\frac{cn}{M+1}}{\log(2M)}-2\ge\frac{\log n}{4\log\log n}$. Thus, $h(G)\le \frac{4n\log\log{n}}{\log{n}}$. The proof is finished.
\end{proof}

\subsection{Proof of~\cref{thm:CircleGraphs} and more general results}\label{section:CircleBetterBound}
We will actually prove a more general result, before this, we introduce some necessary definitions.

\begin{defn}\label{def:incomparable}
    For a graph $G$ and two partial order $\subseteq$ and $\prec$ over $V(G)$, we say $(G,\subseteq,\prec)$ is a $2$-incomparable graph if for any distinct vertices $a,b,c \in V(G)$, the followings hold:
    \begin{itemize}
        \item  $ab$ is an edge in $G$ if and only if $a$ and $b$ are incomparable in both $\subseteq$ and $\prec$.
        \item If $a \subseteq b$ then $a$ and $b$ are incomparable in $\prec$.
        \item If $a \prec b$ then $a$ and $b$ are incomparable in $\subseteq$.
        \item If $a\prec b$ and $c\subseteq a$ then $c\prec b$.
        \item If $a\prec b$ and $c\subseteq b$ then $a\prec c$.
    \end{itemize}
\end{defn}

By \Cref{def:incomparable}, we can directly obtain the following facts.

\begin{fact}\label{fact:dont share slaves}
    Suppose that $a$, $b$ and $c$ are distinct vertices in a $2$-incomparable graph $(G,\subseteq,\prec)$ satisfying $c\subseteq a$ and $c\subseteq b$,
    then $a$ and $b$ are incomparable in $\prec$.
\end{fact}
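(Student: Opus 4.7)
The plan is to derive the statement directly from the axioms of \Cref{def:incomparable} by a short proof by contradiction. Assume, toward a contradiction, that $a$ and $b$ are comparable in $\prec$; by symmetry between $a$ and $b$ (both play the same role with respect to $c$), we may assume $a \prec b$.

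Now I would invoke the fifth bullet of \Cref{def:incomparable}, which says that if $a \prec b$ and $c \subseteq b$, then $a \prec c$. Since by hypothesis $c \subseteq b$, this yields $a \prec c$. On the other hand, the second bullet says that $c \subseteq a$ forces $c$ and $a$ to be incomparable in $\prec$, which directly contradicts $a \prec c$. (Alternatively one could argue using the fourth bullet: from $a \prec b$ and $c \subseteq a$ obtain $c \prec b$, and then contradict the second bullet applied to $c \subseteq b$. Either route works and the symmetry lets us assume $a \prec b$ rather than $b \prec a$ without loss of generality.)

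There is essentially no obstacle here; the fact is a one-line consequence of the closure rules in \Cref{def:incomparable}. The only mild care required is the symmetry argument at the start, ensuring we really can reduce to the case $a \prec b$ (which is fine because $c$ is $\subseteq$-below both $a$ and $b$, so the two vertices are interchangeable in the hypothesis). The proof should occupy no more than a few lines in the final writeup.
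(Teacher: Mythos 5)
Your proof is correct and is exactly the intended argument: the paper states \Cref{fact:dont share slaves} without proof as an immediate consequence of \Cref{def:incomparable}, and your contradiction using the fifth (or equivalently the fourth) closure rule together with the second bullet is precisely the one-liner justifying it. The symmetry reduction to $a\prec b$ is valid since the hypotheses $c\subseteq a$ and $c\subseteq b$ are symmetric in $a$ and $b$.
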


\begin{fact}\label{fact:independent-chain}
    Suppose that $I$ is an independent set of a $2$-incomparable graph $(G,\subseteq,\prec)$ such that every pair of elements in $I$ are incomparable in $\subseteq$ (or $\prec$), then $I$ is a chain in $\prec$ (or $\subseteq$).
\end{fact}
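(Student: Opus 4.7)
The plan is to read off the claim essentially directly from the first bullet of Definition~\ref{def:incomparable}, which states that $ab \in E(G)$ if and only if $a$ and $b$ are incomparable in both $\subseteq$ and $\prec$. Contrapositively, a pair is a non-edge precisely when it is comparable in at least one of the two partial orders. This equivalence is the only ingredient needed.

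I will argue the case where the elements of $I$ are pairwise incomparable in $\subseteq$; the other version is fully symmetric. Fix any two distinct $a, b \in I$. Since $I$ is independent, $ab$ is a non-edge, so by the equivalence above, $a$ and $b$ are comparable in $\subseteq$ or in $\prec$. By hypothesis, they are incomparable in $\subseteq$, so they must be $\prec$-comparable. Thus every pair of distinct elements of $I$ is comparable in $\prec$, and since $\prec$ is already a partial order, pairwise comparability promotes $\prec$ to a total order on $I$. In other words, $I$ is a chain in $\prec$, which is exactly the conclusion.

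There is no real obstacle: the whole content of the fact is repackaging the biconditional in Definition~\ref{def:incomparable}. The only point that requires a bit of care is to make sure the two directions of the statement are handled symmetrically and not conflated; the second and third bullets of the definition, which ensure that comparability in one order rules out comparability in the other, make this symmetry transparent.
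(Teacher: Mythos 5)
Your proof is correct and is essentially the argument the paper intends: the paper states \Cref{fact:independent-chain} without proof, as an immediate consequence of the first bullet of \Cref{def:incomparable}, and your contrapositive reading of that biconditional (non-edge $\Leftrightarrow$ comparable in $\subseteq$ or $\prec$) followed by the use of pairwise $\subseteq$-incomparability to force pairwise $\prec$-comparability is exactly the intended two-line deduction.
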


\begin{defn}\label{def:K-intersecting}
    Let $K: \mathbb{N \to \mathbb{N}}$ be a map.
    We say a $2$-incomparable graph $(G,\subseteq,\prec)$ is $K$-intersecting if for any chain $a_1\prec a_2\prec \dots \prec a_\ell$ and $x\in V(G)\backslash \{a_1,a_2,\dots,a_\ell\}$, the inequality $|N(x)\cap \{a_1,a_2,\dots,a_\ell\}|\leq K(|V(G)|)$.

    Especially, if the map $K$ is a constant map satisfying $K(x) = K_0$ for any $x\in \mathbb{N}$ and some integer $K_0$.
    We replace the notation $K$-intersecting by $K_0$-intersecting for simplicity.
\end{defn}
One of the equivalent definitions of a circle graph is the \emph{overlap graph}. A graph is an overlap graph if its vertices correspond one-to-one with intervals on a line, where two vertices are adjacent if and only if their intervals partially overlap — that is, the intervals have a non-empty intersection, but neither is fully contained within the other. Let $a \prec b$ indicate that the intervals of $a$ and $b$ are disjoint, with the interval of $a$ lying to the left of that of $b$. Similarly, let $a \subseteq b$ indicate that the interval of $a$ is contained within the interval of $b$, implying that $(a, b)$ is not an edge in $G$. It is then straightforward to verify that an overlap graph $G$, together with the relations $\subseteq$ and $\prec$, forms a $2$-intersecting $2$-incomparable graph.

Then our main result reads as follows, which also implies~\cref{thm:CircleGraphs}.

\begin{theorem}\label{thm:2-incomparable K-intersecting}
    For any fixed constants $0< \beta<1$,
    the following holds. Let $(G,\subseteq,\prec)$ be a $K$-intersecting $2$-incomparable graph with $|V(G)| = n$ and $K: \mathbb{N}\to \mathbb{N}$.
    If $\alpha(G) \ge \beta n$, then $h(G)\le 2\sqrt{\frac{1-\beta}{\beta}n K(n)}$.
    In particular, $h(G) = o(n)$ if $K(n) = o(n)$.
\end{theorem}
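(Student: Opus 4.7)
The plan is to follow the color-based template of~\cref{thm:WeakCircle} but replace its multiscale layering by a single Dilworth-type dichotomy that converts the $K$-intersecting axiom directly into a $\sqrt{n}$-order degree bound. First, for each $I\in\MIS$ and $v\in I$ set $k_I(v):=|\{u\in I: u\subsetneq v\}|$. The swap argument of~\cref{claim:CircleGraphColoring} generalizes: if $k_X(v)<k_Y(v)$, then $(X\setminus K_X(v))\cup K_Y(v)$ is independent because, for $a\in X\setminus K_X(v)$ and $b\in K_Y(v)$, non-adjacency follows either from transitivity of $\subseteq$ (cases $v\subseteq a$ or $a\subseteq v$) or from the propagation axioms of~\cref{def:incomparable} (cases $a\prec v$ or $v\prec a$, where ``$a'\prec b', c\subseteq b'\Rightarrow a'\prec c$'' applies). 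Hence the level sets $V_0,V_1,\ldots$ partition $\bigcup_{I\in\MIS}I$, and by~\cref{fact:independent-chain} the set $Z_I:=I\cap V_0$ of $\subseteq$-minimal elements of $I$ is a nonempty $\prec$-chain.

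The central uniform bound to be proved is $|I\cap V_c|\le|Z_I|$ for every $c\ge 0$ and every $I\in\MIS$. Indeed $I\cap V_c$ is a $\subseteq$-antichain inside $I$ (a strict relation $u\subsetneq u'$ would force $k_I(u)<k_I(u')$), hence a $\prec$-chain by~\cref{fact:independent-chain}; sending each $u\in I\cap V_c$ to a $\subseteq$-minimal element of $I$ below it gives an injection into $Z_I$ by~\cref{fact:dont share slaves} applied to the $\prec$-chain $I\cap V_c$.

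With these preparations in place, set $T:=\sqrt{\beta(1-\beta)nK(n)}$ and split on $\max_I|Z_I|$. In Case A, some $|Z_I|\ge T$: applying $K$-intersecting to the $\prec$-chain $Z_I$ bounds $e_G(Z_I,V(G)\setminus I)\le K(n)(1-\beta)n$, so some $v\in Z_I$ has $d_G(v)\le K(n)(1-\beta)n/T=\sqrt{(1-\beta)nK(n)/\beta}$, and~\cref{obs:Basic}(2) yields a hitting set of this size plus one. In Case B, $|Z_I|<T$ for every $I$, so $|I\cap V_c|<T$ for every $c,I$ and each $I$ meets $\ge\beta n/T$ distinct levels. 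My plan here is a second dichotomy on $V_0=\bigcup_I Z_I$ with rebalanced threshold $T':=\sqrt{\beta nK(n)/(1-\beta)}$: if $V_0$ contains a $\prec$-chain of length $\ge T'$, applying $K$-intersecting inside $V(G)$ yields a vertex of degree at most $K(n)n/T'=\sqrt{(1-\beta)nK(n)/\beta}$; otherwise Dilworth decomposes $V_0$ into fewer than $T'$ $\prec$-antichains, and the hitting-set bound must be extracted from this decomposition together with $V_0$ itself being a hitting set and $|I\cap V_0|<T$ for every $I$.

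The main obstacle is Case B. The subtlety is that different $\MIS$'s may contribute $\subseteq$-nested $\subseteq$-minimal vertices, so $V_0$ is not in general a $\subseteq$-antichain of $V(G)$; the structural leverage must come entirely from the $\prec$-order via $K$-intersecting applied to chains that may cross several $\MIS$'s. After a final balancing of $T$ and $T'$, the two cases combine to give the claimed bound $h(G)\le 2\sqrt{(1-\beta)nK(n)/\beta}$, and $h(G)=o(n)$ when $K(n)=o(n)$.
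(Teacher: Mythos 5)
Your setup matches the paper's: the local color $C_I(x)=|D_I(x,\subseteq)|$ is globally well-defined by the same swap argument, the minimal layer $Z_I=I\cap V_0$ is a nonempty $\prec$-chain, and the injection $|I\cap V_c|\le|Z_I|$ (via \cref{fact:dont share slaves}) is a correct and useful observation. Your Case~A also works: if some $|Z_I|\ge T$, then $K$-intersecting applied to the $\prec$-chain $Z_I$ bounds $e_G(Z_I,V(G)\setminus I)\le (1-\beta)nK(n)$, a vertex of small degree exists, and \cref{obs:Basic}(2) finishes. (The paper packages the same thing slightly differently: it assumes the conclusion fails, so $\delta(G)\ge L$, and uses this to prove $|Z_I|\le P$ uniformly for every $I$ — no case split is needed.)

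The genuine gap is Case~B, and you essentially admit it. Once every $|Z_I|<T$, you propose a second dichotomy on $V_0$ via Dilworth/Mirsky: a long $\prec$-chain gives a low-degree vertex (fine), but in the remaining case you only get a decomposition of $V_0$ into fewer than $T'$ $\prec$-antichains, and you give no mechanism by which such antichains — which carry no useful adjacency structure under $K$-intersecting (the axiom controls neighborhoods of \emph{chains}, not antichains) — produce a small hitting set. The observation that $|I\cap V_c|<T$ so $I$ meets many color classes is also not enough on its own: it does not tell you which union of color classes forms a small hitting set. What the paper does here, and what your outline is missing, is \cref{claim:find hitting set} and its ``no-jump'' lemma (\cref{claim:no jump}): under the uniform bound $|Z_I|\le P$, it shows that for every residue $t$ the set $H_t=\bigcup_{i=0}^{L}V_{im+t}$ hits every $I\in\MIS$, because if $I$ avoided $H_t$ then (by induction over $\approx L$ strides of width $m$) the number of $V_0$-predecessors of a vertex would have to grow by one per stride, forcing $\alpha(G)=|I|\le\sum_{y\in Q}C(y)\le Pm=\tfrac12\alpha(G)$, a contradiction; then pigeonhole over the $\lfloor m/2\rfloor$ disjoint sets $H_t$ yields one of size at most $2n/m$. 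Without a surrogate for this stride/no-jump argument, your Case~B does not close, and no rebalancing of $T$ and $T'$ will bridge the gap.
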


\begin{proof}[Proof of~\cref{thm:2-incomparable K-intersecting}]
    For any vertex $x\in V(G)$ and subset $A\subset V(G)$, let 
    \begin{equation*}
        D(x,\subseteq) := \{ y\in V(G): \ y\subseteq x \} \textup{ and }
        D_A(x,\subseteq) := A\cap D(x,\subseteq).
    \end{equation*}

    Assume that the conclusion does not hold.
    Without loss of generality, we can assume that each vertex $x\in V(G)$ is contained in at least one maximum independent set of $G$.

    Since for each $x\in V(G)$, we have $\{x\}\cup N(x)$ is a hitting set of $G$,
    so we directly have $\delta(G) \ge L : =  2\sqrt{\frac{1-\beta}{\beta}n K(n)}$. For any $I\in \MIS$, we define a local coloring map $C_I: I\to \mathbb{N}$ such that $C_I(x) = |D_I(x,\subseteq)|$.
   Furthermore, we define a global coloring map $C: V(G)\to \mathbb{N}$: for any $x\in V(G)$, let $C(x) = C_I(x)$ for some $I\in \MIS$ containing $x$.

    \begin{claim}\label{claim:local-global coloring}
        The coloring map $C$ is well-defined.
    \end{claim}
    \begin{poc}
        By the assumption above, for each $x\in V(G)$, there exists $I\in \MIS$ containing $x$. It then suffices to show that, if $x$ is contained in two different maximum independent sets $I$ and $J$ of $G$, $C_I(x) = C_J(x)$.  
        Suppose not, denote $C_I(x) = i< j =C_J(x)$.
        Let $I' = (I\backslash D_I(x,\subseteq)) \cup D_J(x,\subseteq)$.
        It is easy to check that $|I'| = |I\backslash D_I(x,\subseteq)| + |D_J(x,\subseteq)| = \alpha(G) -i +j > \alpha(G)$.
        It remains to show that $I'$ is also an independent set of $G$, which leads to a contradiction. To see this, if $I'$ is not an independent set, there exists $y\in D_J(x,\subseteq)$ and $z\in I\backslash D_I(x,\subseteq)$ such that $yz\in E(G)$.
        By the definition above, we have $y\subseteq x$, $zx\notin E(G)$.
        Since $z\notin D_I(x,\subseteq)$, we can see at least one of the events $z < x$, $x < z$ and $x\subseteq z$ occurs. Then one can easily check by \Cref{def:incomparable} that all these three possibilities imply $yz\notin E(G)$, which contradicts to the assumption.
        This completes the proof.
        
    \end{poc}

    \noindent 
    Now we denote $V_i = \{ x\in V(G): C(x) = i\}$ for any integer $0\le i\le n$.
    For any $I\in \MIS$, denote $M_I = V_0\cap I$.
    Observe that $M_I \neq \emptyset$ for any $I\in \MIS$, furthermore, we have the following claim.

    \begin{claim}\label{claim: small number of slaves}
        For any $I\in \MIS$,
        $0<|M_I|\le P$, where $P = \frac{(1-\beta)n K(n)}{L} = 2\sqrt{\beta (1-\beta) n K(n)}$.
    \end{claim}

    \begin{poc}
        Suppose there exists $I\in \MIS$ satisfying $|M_I|> P$.
        We will double count the cardinality of the following set
        \begin{align*}
            S_I = \{
            (x,v): x\notin I,\ v\in M_I,\ xv\in E(G)
            \}.
        \end{align*}
        On one hand,
        \begin{align}\label{ineq:S_I lower}
            |S_I|= \sum_{v\in M_I}\textup{deg}(v)
            \ge |M_I| \cdot\delta(G).
        \end{align}
        On the other hand,
        \begin{align}\label{ineq:S_I upper}
            |S_I| = \sum_{x\in V(G)\backslash I}|N(x)\cap M_I| 
            \le (n-\alpha(G)) K(n) \le (1-\beta)n K(n),
        \end{align}
        where the first inequality holds by \Cref{def:K-intersecting} and the observation that $M_I$ is a chain in $\prec$.

        By combining (\ref{ineq:S_I lower}) and (\ref{ineq:S_I upper}) we have
        \begin{align*}
            P< |M_I| \le \frac{(1-\beta)n K(n)}{\delta(G)}
            \le \frac{(1-\beta)n K(n)}{L} = P,
        \end{align*}
        which leads to a contradiction and completes the proof.
        \end{poc}

        Finally, we will construct many disjoint hitting sets of $G$ based on the coloring $C$ and apply pigeonhole principle to show that there exists a small one. More precisely, let $m = \frac{\beta n}{2P} = \frac{1}{4}\sqrt{\frac{\beta n }{(1-\beta )K(n)}}$. For each $t\in [\lfloor\frac{m}{2}\rfloor]$, denote $H_t = \bigcup_{i=0}^{L}V_{im + t}$.

        \begin{prop}\label{claim:find hitting set}
            For each $t\in [\lfloor\frac{m}{2}\rfloor]$, $H_t$ is a hitting set of $G$.
        \end{prop}

        Suppose that \Cref{claim:find hitting set} holds, we can get a hitting set of $G$ of size at most $2n/m =  2\sqrt{\frac{1-\beta}{\beta}n K(n)}$ by pigeonhole principle, which completes the proof.
        So it suffices to prove \Cref{claim:find hitting set}.

        \begin{proof}[Proof of \Cref{claim:find hitting set}]
            For each $t\in [m]$, it suffices to show that $I\cap H_t \neq \emptyset$ for any $I\in \MIS$. For each $x\in I$, denote $S_x = \{y\in V_0\cap I: y\subseteq x\}$ and $|S_x| = s_x$. We have the following claim.
            \begin{claim}\label{claim:no jump}
                For each $x\in I$ with $C(x)\ge \max\{(i-1)m + t +1, 0\}$ for some $i\in \{0,1,\ldots,L\}$, we have $s_x\ge i+1$.
                Conversely, if $s_x = i$, then $C(x)\le \max\{0,(i-1)m+t -1\}$.
            \end{claim}

        \begin{poc}
            We prove the claim by induction on $i$. When $i = 0$, it trivially holds that $s_x\ge 1$.

            Suppose the statement holds for all $j\le i$.
            Assume for contrary that there exists $x\in I$ with $C(x)\ge im +t+1$ and $2\le s_x\le i$.
            Denote $B$ to be a subset of $D_I(x,\subseteq)$ consisting of all elements $a$ such that $s_x = s_a$.
            Then it is easy to check that $B$ is a chain in $\subseteq$.
            Denote $A$ to be the set consisting of all the maximal elements of $D_I(x,\subseteq)\backslash B$ in $\subseteq$,
            then $A$ is a chain in $\prec$ by \Cref{fact:independent-chain}.
            Furthermore, we have the following observation by \Cref{fact:dont share slaves}.
            \begin{obs}\label{obs:dont share slaves}
                For two different elements $y$ and $z$ in $A$, $S_y \cap S_z = \emptyset$.
            \end{obs}

            By the definition of $A$ and the induction hypothesis, we know that each element $y\in A$ satisfies $s_y <i$ and then $C(y)\le (s_y-1) m+t-1$.
            By the definition of $B$ and \Cref{fact:independent-chain}, we know that $B$ is a chain in $\subseteq$ and the minimal element $b$ satisfies
            \begin{align*}
                C(b)= \sum_{y\in A} C(y) +1 
                &\le \sum_{y\in A}\big( (s_y-1)m +t-1 \big)+1
                = s_x m + |A|(t-m-1) +1\\ 
                &= (s_x-1)m+t-1 +(|A|-1)(t-m-1)+1 \le (s_x-1)m+t-1.
            \end{align*}
            Since $B$ is a chain in $\subseteq$, the color set $C(B) = \{C(z): z\in B\}|$ should be consecutive in $\mathbb{N}$.
            Moreover, since $(i-1)m+t\in [C(b), C(x)]$, there exists $z\in B\subset I$ such that $C(z) = (i-1)m+t$, which is a contradiction to the assumption that $I\cap H_t = \emptyset$.
        \end{poc}

        Now we will continue to prove \Cref{claim:find hitting set}.
        Let $Q$ be a subset of $I$ consisting of all the maximal elements in the partial order $\subseteq$.
        Similar to \Cref{obs:dont share slaves}, we have the similar result based on \Cref{fact:dont share slaves}.
        \begin{obs}\label{obs:dont have slaves again}
            For any two different elements $y$ and $z$ in $Q$, we have $S_y\cap S_z = \emptyset$.
        \end{obs}
        Therefore we have $\alpha(G) = |I|\le \sum_{y\in Q}C(y)$. By \Cref{claim:no jump} and \cref{obs:dont have slaves again} we further have
        \begin{align*}
            \sum_{y\in Q}C(y) \le \sum_{y\in Q}\max\{0,(s_y-1)m+t-1\}
            \le \sum_{y\in Q} s_y m
            \le
            |V_0\cap I|m \le Pm 
            = \frac{1}{2}\alpha(G)
            < \alpha(G),
        \end{align*}
        which leads to a contradiction and completes the proof of~\cref{claim:find hitting set}.
        \end{proof}  

        This finishes the proof.
    \end{proof}

\section{Concluding remarks}\label{sec:ConCludingRemarks}
In conclusion, our study delves into the longstanding open problem proposed by Bollob\'{a}s, Erd\H{o}s and Tuza, which posits that for any $n$-vertex graph with $\alpha(G)=\Omega(n)$, it suffices to employ $o(n)$ vertices to hit all maximum independent sets. Our investigation draws inspiration from the remarkable result, along with its elegant proof, by Alon~\cite{2021AlonHittingSet}, demonstrating validity of~\cref{conj:BETConj} for regular graphs with large independence number, as well as the work of Hajebi, Li, and Spirkl~\cite{2023P5Free}, establishing the existence of small hitting sets for induced $P_{5}$-free graphs.

Our primary contribution lies not only in confirming the validity of~\cref{conj:BETConj} across several significant graph classes but also in synthesizing concepts from diverse fields to either construct or demonstrate the existence of small hitting sets. We envision our ideas serving as a springboard for further exploration aimed at tackling this conjecture.

It is intriguing to note that the conjecture remains open even when $\alpha(G)=(\frac{1}{2}-\varepsilon)|G|$ for any small $\varepsilon>0$. We believe that addressing this weaker conjecture presents its own set of challenges worthy of pursuit.

\subsection{Constant-sized hitting sets in various sporadic graphs}
In addition to the graph classes we have discussed in~\cref{sec:Tools}, in many other classes of geometric graphs, we can also prove the existence of hitting sets of constant size. We select some representative ones to introduce the sketches briefly.
For a configuration $S$ in high dimensional space $\mathbb{R}^{d}$, let $\mu(S)$ be the Lebesgue measure of $S$, and $|\boldsymbol{x}-\boldsymbol{y}|$ represent the Euclidean distance between points $\boldsymbol{x},\boldsymbol{y}\in\mathbb{R}^{d}$, we define the diameter of $S$ as $\textup{Diam}(S):=\sup\{|\boldsymbol{a}-\boldsymbol{b}|:\boldsymbol{a},\boldsymbol{b}\in S\}$. Observe that in an intersection graph of configurations with positive measures and bounded diameter $D$, there is no large induced star, because the ball of radius $2D$ can only contain bounded number of pairwise disjoint configurations. Therefore, the following result follows.
\begin{prop}\label{thm:GeneralConvexSets}
  Let $\varepsilon>0$ be a real number and $d\ge 1$ be a positive integer. Let $\mathcal{S}:=\{S_{1},S_{2},\ldots,S_{n}\}\subseteq \mathbb{R}^{d}$ be a collection of $n$ sets such that for each $i\in [n]$, $\mu(S_{i})>\varepsilon$ and $\textup{Diam}(S_{i})\le D$ for some constant $D>0$. Let $G$ be the intersection graph of the sets in $\mathcal{S}$, then $h(G)= O_{\varepsilon,D,d}(\frac{n}{\alpha(G)})$. In particular, if $\alpha(G)=cn$ for some $c>0$, then $h(G)=O_{\varepsilon,D,d,c}(1)$.
\end{prop}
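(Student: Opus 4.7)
The plan is to convert the geometric hypothesis into the purely combinatorial statement that $G$ is $K_{1,k}$-free for a constant $k=k(\varepsilon,D,d)$, and then extract a sub-linear minimum-degree bound from $\alpha(G)$ via a short double-counting argument, after which \cref{obs:Basic}(2) finishes the proof.

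First I would establish $K_{1,k}$-freeness. Fix any $v\in V(G)$ and any point $x\in S_v$; since $\textup{Diam}(S_v)\le D$, we have $S_v\subseteq B(x,D)$, where $B(y,r)$ denotes the closed Euclidean ball of radius $r$ about $y$. For any $u\in N_G(v)$, picking $z\in S_u\cap S_v$ and using $\textup{Diam}(S_u)\le D$ yields $S_u\subseteq B(z,D)\subseteq B(x,2D)$. An independent set $J\subseteq N_G(v)$ in $G$ therefore corresponds to pairwise disjoint measurable subsets of $B(x,2D)$, each of measure exceeding $\varepsilon$, so $|J|\cdot\varepsilon<\mu(B(x,2D))$ and we may take $k:=\lceil\mu(B(0,2D))/\varepsilon\rceil+1$, whence $G$ contains no induced $K_{1,k}$.

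Next I would convert $K_{1,k}$-freeness into the hitting-set bound. Fix any $I\in\MIS$; since $I$ is maximal, every $v\in V(G)\setminus I$ satisfies $1\le |N_G(v)\cap I|\le k-1$, the upper bound holding because $N_G(v)\cap I$ is an independent subset of $N_G(v)$. Counting edges between $I$ and $V(G)\setminus I$ in two ways gives
\begin{equation*}
    \alpha(G)\cdot \min_{u\in I} d_G(u)\;\le\;\sum_{u\in I}d_G(u)\;=\;\sum_{v\in V(G)\setminus I}|N_G(v)\cap I|\;\le\;(k-1)(n-\alpha(G)),
\end{equation*}
so some $u\in I$ has $d_G(u)\le (k-1)(n-\alpha(G))/\alpha(G)$. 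Applying \cref{obs:Basic}(2) to this vertex yields $h(G)\le d_G(u)+1=O_{\varepsilon,D,d}(n/\alpha(G))$; specializing to $\alpha(G)=cn$ gives the claimed $O_{\varepsilon,D,d,c}(1)$ bound.

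I do not anticipate any real obstacle: the one subtle point is ensuring that the constant $k$ depends only on $\varepsilon$, $D$ and $d$ (and not on the shapes of the individual $S_i$), which is guaranteed by the uniform diameter bound placing every closed neighbourhood into a single ball of radius $2D$; everything else is standard double counting combined with \cref{obs:Basic}(2).
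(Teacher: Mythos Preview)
Your proposal is correct and follows essentially the same approach as the paper: the paper's proof is the one-sentence observation preceding the proposition, namely that the uniform diameter and measure bounds force the intersection graph to be induced $K_{1,k}$-free for a constant $k=k(\varepsilon,D,d)$, from which the bound on $h(G)$ is claimed to follow immediately. You have simply spelled out both steps in full, including the double-counting argument that the paper leaves implicit; there is no substantive difference in strategy.
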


In low dimensional space, the \Yemph{interval graph} is formed from a set of intervals on the real line, with a vertex for each interval and an edge between vertices whose intervals intersect. It was known that any interval graph is an incomparability graph, therefore, by~\cref{thm:Incomparability}, we can see $h(G)\le \frac{n}{\alpha(G)}$ when $G$ is an $n$-vertex interval graph.

An interesting generalization of interval graph is \Yemph{circular-arc graph}, which is the intersection graph of a set of arcs on the circle. It has one vertex for each arc in the set, and an edge between every pair of vertices corresponding to arcs that intersect. Observe that, for any given maximum independent set $I$, if a vertex $v\in V(G)\setminus I$ such that $|N_{G}(v)\cap I|\ge 4$, then $v$ cannot belong to any maximum independent set in $G$. Based on this, the following result follows. 

\begin{prop}\label{thm:circular-arc}
Let $G$ be a circular-arc graph, then we have $h(G)\le \frac{3n}{\alpha(G)}-2$, and in particular if $\alpha(G)=cn$ for some $c>0$, we have $h(G)\le\frac{3}{c}-2$.
\end{prop}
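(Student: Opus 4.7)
The plan is to verify the observation quoted just before the proposition and then combine it with a one-line double-counting argument.

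\emph{Step 1 (the key observation).} Fix a maximum independent set $I=\{a_1,\dots,a_m\}$ with $m=\alpha(G)$ and suppose $v\in V(G)\setminus I$ has $k:=|N_G(v)\cap I|\ge 4$. Because $v$ is a single arc and the arcs in $I$ are pairwise disjoint, the arcs of $I$ crossed by $v$ form a cyclically consecutive block; after relabeling, call them $a_1,\dots,a_k$. The endpoints of $v$ must then lie inside $a_1$ and $a_k$ (or in the gaps immediately adjacent to them), so the interior arcs $a_2,\dots,a_{k-1}$ are contained entirely in $v$. If $v$ belonged to some $I'\in\MIS$, every arc of $I'\setminus\{v\}$ would be disjoint from $v$, hence disjoint from each $a_i$ with $2\le i\le k-1$; therefore $(I'\setminus\{v\})\cup\{a_2,\dots,a_{k-1}\}$ would be an independent set of size $(\alpha(G)-1)+(k-2)\ge \alpha(G)+1$, which is impossible. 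Thus $v$ lies in no maximum independent set.

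\emph{Step 2 (averaging and the hitting set).} Let $W=\bigcup_{I'\in\MIS}I'$. By Step~1, every $v\in W\setminus I$ satisfies $|N_G(v)\cap I|\le 3$, so double counting the edges between $I$ and $W\setminus I$ gives
\[
\sum_{a\in I}\bigl|N_G(a)\cap(W\setminus I)\bigr|=\sum_{v\in W\setminus I}|N_G(v)\cap I|\le 3(n-m).
\]
Averaging produces some $a^{*}\in I$ with $|N_G(a^{*})\cap(W\setminus I)|\le\frac{3(n-m)}{m}$. Set $H:=\{a^{*}\}\cup(N_G(a^{*})\cap W)$. Since $I$ is independent, $N_G(a^{*})\cap I=\emptyset$, so
\[
|H|\le 1+\frac{3(n-m)}{m}=\frac{3n}{\alpha(G)}-2.
\]
To check that $H$ is a hitting set, take any $I'\in\MIS$: if $a^{*}\in I'$ we are done; otherwise $I'\cup\{a^{*}\}$ is not independent (by maximality of $|I'|$), so $a^{*}$ has a neighbor $u\in I'\subseteq W$, and this $u$ lies in $H$. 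The bound $h(G)\le\frac{3}{c}-2$ when $\alpha(G)=cn$ follows by direct substitution.

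\emph{Main obstacle.} The only non-routine ingredient is the geometric observation in Step~1; once the ``contained interior arcs'' picture is set up, the remainder is essentially the same averaging template already used throughout Section~3. I do not foresee any serious additional difficulty beyond carefully justifying the cyclic-consecutive block structure of $N_G(v)\cap I$ and the containment $a_2,\dots,a_{k-1}\subseteq v$.
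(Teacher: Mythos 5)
Your proof is correct and follows essentially the same route the paper sketches: you verify the observation stated just before the proposition (any $v\notin I$ with four or more neighbours in a fixed $I\in\MIS$ lies in no maximum independent set, because at least $k-2\ge 2$ arcs of $N_G(v)\cap I$ are contained in the arc of $v$ and can be swapped in), and then hit everything with $\{a^*\}\cup(N_G(a^*)\cap W)$ for an average-degree vertex $a^*\in I$, giving $1+\frac{3(n-\alpha)}{\alpha}=\frac{3n}{\alpha(G)}-2$. The only place to be a little more careful is the claim that exactly $a_2,\dots,a_{k-1}$ are contained in $v$: what is really needed (and true) is that at most two arcs of $I\cap N_G(v)$ are \emph{not} contained in $v$, namely those covering an endpoint of $v$, since the arcs of $I$ are pairwise disjoint and $v$ has only two endpoints.
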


\subsection{Geometric extensions of circle graphs}
We can also extend the technique in the proof of~\cref{thm:CircleGraphs} to a more general class of graphs. More precisely, a \Yemph{circle 
$k$-gon} is the region that lies between $k$
 or fewer non-crossing chords of a circle, no chord connecting the arcs between two other chords. The sides of a circle $k$-gon are either chords or arcs of the circle. A graph is a \Yemph{circle k-gon graph} if it is the intersection graph of circle $k$-gons in a common circle. One can see that any circle graph is indeed a circle $1$-gon graph. Using the almost identical argument as that in proof of~\cref{thm:WeakCircle}, one can show the following more general result.

\begin{theorem}\label{thm:CircleTrapezoid}
   Let $k\ge 1$ be a positive integer and let $G$ be a circle $k$-gon graph with $\alpha(G)=\Omega(n)$, then we have $h(G)\le O(\frac{n\log\log{n}}{\log{n}})$.
\end{theorem}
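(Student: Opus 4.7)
The plan is to adapt the proof of~\cref{thm:WeakCircle} to circle $k$-gons. For each vertex $v$, let $P_v$ denote the corresponding circle $k$-gon, bounded by at most $k$ chords and $k$ arcs on the big circle. By generic perturbation, fix a reference point $o$ on the big circle that lies in the interior of one pocket of every $P_v$ (the arc-footprints of the $P_v$'s can be arranged so that their union is not the whole circle, or else we add a negligibly small ``dummy'' pocket to each relevant $P_v$). Call the pocket of $P_v$ containing $o$ the $o$-pocket $\pi_o(v)$, and call the at most $k-1$ siblings the \emph{non-$o$ pockets} of $P_v$; let $A_v$ denote the union of the big-circle arcs that bound these non-$o$ pockets. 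Declare that $v$ covers $u$ iff the arc-footprint $F_u$ of $P_u$ lies inside $A_v$, equivalently $P_u$ sits entirely in some non-$o$ pocket of $P_v$. This generalizes minor-arc containment for chord graphs and gives an antisymmetric relation on non-adjacent pairs.

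With this covering relation in place, every claim inside the proof of~\cref{thm:WeakCircle} has an essentially identical analog. The uniqueness of the color $k_I(v)=|K_I(v)|$ (the analog of~\cref{claim:CircleGraphColoring}) uses the same swap $(I\setminus K_I(v))\cup K_J(v)$: every $u\in(I\setminus K_I(v))\setminus\{v\}$ must lie in $\pi_o(v)$ while every $w\in K_J(v)$ lies in a non-$o$ pocket, and distinct pockets of $P_v$ are distinct connected components of $\textup{disk}\setminus P_v$, so $u$ and $w$ cannot intersect. The set $Z_I=\{v\in I:k_I(v)=0\}$ is nonempty by taking a minimal element of the covering poset, and for $v_1,v_2\in Z_I$ the fact $v_2\in\pi_o(v_1)$ forces $P_{v_2}$ and its non-$o$-pocket arcs to sit inside $\pi_o(v_1)$'s arc, so the $A_v$'s for $v\in Z_I$ are pairwise disjoint on the big circle.

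With the $A_v$'s disjoint, the straddling argument from the chord case generalizes: $x$ has at most $2k$ chord-endpoints on the big circle, and each can straddle at most one arc from $\bigcup_{v\in Z_I}A_v$, giving an $O(k)$ bound on chord-crossing adjacencies between $x$ and non-$o$ chords of $Z_I$-vertices. The direct cover bound (the analog of~\cref{claim:directlyCover}) becomes that each $v\in I$ directly covers at most $|Z_I|$ other vertices: the set $U$ of directly-covered elements is pairwise incomparable under covering, and by the same geometric argument as for $Z_I$ the two elements of any incomparable pair must lie in each other's $o$-pockets, making their $A$-arcs disjoint; hence each $u\in U$ maps injectively to a minimum-color descendant inside $Z_I$. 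The iteration in the analogs of~\cref{claim:LevelOne} and~\cref{claim:HighLevel} then proceeds verbatim with $M=\Theta(\log n/\log\log n)$, and yields $h(G)\le O_k(n\log\log n/\log n)$.

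The main technical obstacle I anticipate is in the degree bound: beyond chord-crossings, one must also control adjacencies arising from containment ($P_x\supseteq P_v$ or $P_v\supseteq P_x$), a case that the straddling argument does not directly cover and that does not arise for $k=1$. I plan to exploit the $Z_I$-constraint that every other $I$-vertex lies in $\pi_o(v)$, so that any family of $Z_I$-vertices contained inside $P_x$ must have mutually compatible $o$-pocket structures; combined with the pairwise disjointness of the $A_v$'s and the fact that $P_x$ has at most $k$ arcs, this should limit the containment contribution to $O(k^2)$. If this geometric bookkeeping proves too delicate, an alternative is to refine the canonical pocket selector (for example, taking the minimum-arc non-$o$ pocket of each $v$) and re-verify the swap, degree, and direct-cover arguments under the modified definition, at the cost of only constant factors in the final bound.
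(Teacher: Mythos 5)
Your high-level plan --- adapt the covering relation, coloring, and level argument from the proof of~\cref{thm:WeakCircle} --- is exactly what the paper gestures at, and several of your reductions (the swap argument behind the analog of~\cref{claim:CircleGraphColoring}, and the disjointness of the arcs $A_v$ for $v\in Z_I$) are correct. However, you have correctly located the weak point but misjudged its severity: the degree bound is a genuine gap, and your proposed $O(k^2)$ patch is wrong. In the chord case, the crucial input is that any chord $x$ crosses at most two chords of $Z_I$, giving $e(Z_I, V\setminus I)\le 2(1-c)n$ and hence a low-degree vertex in $Z_I$ to invoke~\cref{obs:Basic}. For circle $k$-gons, intersection includes region containment, and a single circle $k$-gon $P_x$ whose chords hug the circle can enclose an arbitrarily large family of pairwise disjoint $P_v$'s with $v\in Z_I$, all of whose $o$-pocket structures are ``compatible'' and whose $A_v$'s are disjoint yet all lie inside the at most $k$ arcs of $P_x$. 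There is no bound depending only on $k$ here: $F_x$ may occupy almost the whole circle, so a single arc of $P_x$ can host unboundedly many disjoint $A_v$'s, and $P_x\supseteq P_v$ for all of them. Replicating such a container $P_x$ into $\ell$ nearly-identical copies gives a graph where every vertex of $Z_I$ has degree $\ell$ while $2(1-c)n/|Z_I| = 2\ell/|Z_I|\ll \ell$, so the averaging step simply does not produce a low-degree vertex. The case $|Z_I|>M$ therefore cannot be dismissed by the degree-count argument, and you would need a substantially different idea (perhaps handling container vertices separately, or arguing about who can actually lie in a maximum independent set) to close this.

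A second, smaller gap is the reference point $o$: the arcs $\bigcup_v F_v$ of all $n$ circle $k$-gons can certainly cover the whole circle, and carving a ``dummy pocket'' at $o$ out of each $P_v$ is not intersection-preserving --- if some $P_v\cap P_w$ is exactly a sliver near $o$, the surgery disconnects them. A cleaner route is to pick $o$ away from all chord endpoints (finitely many) and allow $o\in F_v$ for some $v$; for those $v$ you must redefine the $o$-pocket to be the pocket whose bounding arc is adjacent to $o$ (or, more robustly, define the ordering via the cyclic interval structure after cutting the circle at $o$, without requiring $o$ to lie in a pocket interior). This is repairable with care, but it needs to be done, not waved away. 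Since the paper offers no written proof of~\cref{thm:CircleTrapezoid}, I cannot compare step by step, but as it stands your proposal does not establish the theorem: the containment degree bound is the missing piece.
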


\subsection{The bridge between the balanced separators and the hitting sets}
For an $n$-vertex graph $G$, a subset $S\subseteq V(G)$ is a \Yemph{balanced separator} if every component of $G\setminus X$ has at most $\frac{2n}{3}$ vertices. Equivalently, $S\subseteq V(G)$ is a balanced separator if there is a partition $V:=S\cup V_{1}\cup V_{2}$ with $|V_{1}|,|V_{2}|\le\frac{2|V(G)|}{3}$ such that no vertex in $V_{1}$ is adjacent to any vertex in $V_{2}$.

Here, we establish a straightforward connection between the balanced separator and the hitting set. Let $\mathcal{G}_{\varepsilon}$ be a hereditary family of graphs such that for every graph $G\in \mathcal{G}$, there exists a balanced separator of size at most $\varepsilon|V(G)|$. Note that for any $n$-vertex graph, we can repeatedly take balanced separators and consider the subgraphs induced by smaller components by~\cref{obs:Basic}. Note that the union of those separators forms a hitting set, which is of size at most $\sum\limits_{i}\frac{\varepsilon n}{2^{i-1}}\le 2\varepsilon n$.

\begin{prop}\label{thm:SmallSeparatorSmallHitting}
  Let $G\in\mathcal{G}_{\varepsilon}$ be an $n$-vertex graph, then $h(G)\le 2\varepsilon n$.
\end{prop}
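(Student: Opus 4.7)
The plan is to induct on $n=|V(G)|$, combining the balanced separator hypothesis with Proposition 2.1(1). Given $G\in\mathcal{G}_\varepsilon$ with $|V(G)|=n$, I would fix a balanced separator of size at most $\varepsilon n$ and the associated partition $V(G)=S\sqcup V_1\sqcup V_2$ with $|V_j|\le 2n/3$ and no edges between $V_1$ and $V_2$. Without loss of generality $|V_1|\le|V_2|$, so $|V_1|\le n/2$, which is exactly the ``halving'' needed to make the geometric series $\sum_{i\ge 1}\varepsilon n/2^{i-1}\le 2\varepsilon n$ sketched just before the proposition go through.

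The core claim is the inequality
\[
h(G)\;\le\;|S|+h(G[V_1]).
\]
To prove it, I would take any $I\in\MIS$: if $I\cap S\neq\emptyset$ then $S$ hits $I$ directly, so assume $I\subseteq V_1\cup V_2$. Because no edge crosses between $V_1$ and $V_2$, we may write $I=I_1\sqcup I_2$ with $I_j:=I\cap V_j$, and the chain
\[
\alpha(G)=|I_1|+|I_2|\le\alpha(G[V_1])+\alpha(G[V_2])=\alpha(G[V_1\cup V_2])\le\alpha(G)
\]
forces equality throughout, so each $I_j$ is maximum in $G[V_j]$. Hence any hitting set $T_1$ of $G[V_1]$ meets $I_1\subseteq I$, and $S\cup T_1$ hits every maximum independent set of $G$.

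Since $\mathcal{G}_\varepsilon$ is hereditary, $G[V_1]\in\mathcal{G}_\varepsilon$, so the induction hypothesis yields $h(G[V_1])\le 2\varepsilon|V_1|\le\varepsilon n$, and combining with the displayed inequality gives $h(G)\le 2\varepsilon n$ as required. The main (mild) obstacle is the decomposition step above, where one must exploit the no-crossing-edge property of the separator to guarantee that a maximum independent set avoiding $S$ restricts to a maximum independent set on each side; once this is clarified, the recursion is mechanical and the trivial base case $h(G)\le|V(G)|$ absorbs any small-$n$ loose ends.
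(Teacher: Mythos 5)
Your proof is correct and takes essentially the same route the paper sketches just before the proposition: remove a balanced separator $S$, observe that every maximum independent set avoiding $S$ restricts to a maximum independent set on each side of the partition (so a hitting set for the smaller side suffices, via the component observation), recurse on the smaller side of size at most $n/2$, and sum the geometric series $\sum_i \varepsilon n/2^{i-1} \le 2\varepsilon n$. The equality chain you give is a clean way to make the ``restriction is maximum'' step explicit, and it is exactly the role the paper assigns to \cref{obs:Basic}(1).

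One technicality worth noting (present in the paper's sketch as well): if the partition degenerates with $V_1=\varnothing$ then the recursion only descends to a piece of size at most $2n/3$, not $n/2$; but this forces $|S|\ge n/3$, hence $\varepsilon\ge 1/3$, which is outside the regime where the proposition has content. So for $\varepsilon<1/3$ one can always choose $V_1,V_2$ both nonempty and your halving bound $|V_1|\le n/2$ is legitimate.
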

Therefore, for those hereditary graphs having sublinear balanced separators, they also have sublinear hitting sets. For instance, a classical result of Lipton and Tarjan~\cite{1979SIAMJAMPlanar} shows that every $n$-vertex planar graph contains a balanced separator of size $O(\sqrt{n})$. This result on planar graphs has also been generalized to graphs embedded in a surface of bounded genus~\cite{1984SeparatorGenus}, graphs with a forbidden minor~\cite{1990JAMSAlonSeymourThomas}, and intersection graphs of some geometric objects~\cite{2008AdvFoxPach,1997JACM}.

In addition to the aforementioned classical results, a recent study by Dvo\v{r}\'{a}k, McCarty, and Norin~\cite{2021SIDMASublinearSeparators} provided a sufficient condition for an intersection graph of compact convex sets in $\mathbb{R}^{d}$ to possess a balanced separator of sublinear size. Furthermore, Dvo\v{r}\'{a}k and Wood~\cite{2023SeparatorProduct} have elucidated the connection between balanced separators and other parameters such as tree-width, path-width, and tree-depth. For additional hereditary families of graphs with sublinear balanced separators, we recommend interested readers to refer to~\cite{2021SIDMASublinearSeparators} and the references therein.

\subsection{Some open problems}
\subsubsection{Graphs with bounded dimension}\label{sec:BVCBVC}
\cref{thm:VC1} indicates that any graph with VC-dimension one has a very small hitting set. While for any larger positive integer $d\ge 2$, it remains an interesting open problem.

\begin{conj}\label{conj:GraphWithBVC}
Let $d\ge 2$ be a positive integer, and let $G$ be an $n$-vertex graph with VC-dimension at most $d$. If $\alpha(G)=\Omega(n)$, then $h(G)=o(n)$.
\end{conj}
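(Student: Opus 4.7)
The plan is to try to apply the framework of \cref{thm:General:Sparse} by establishing that, for any two maximum independent sets $I_1,I_2 \in \MIS$, the bipartite subgraph $G[I_1 \triangle I_2]$ is sparse, say $e(G[I_1 \triangle I_2]) = O_d(|I_1 \triangle I_2|)$. If such a sparsity statement could be proved, then \cref{thm:General:Sparse} would immediately give $h(G) = O_d(n/\log n)$, actually improving on the $o(n)$ bound targeted by \cref{conj:GraphWithBVC}. The starting point would be the Sauer-Shelah lemma: $\VC(G) \le d$ implies that any $S \subseteq V(G)$ admits only $O(|S|^d)$ distinct neighborhood traces. Specialising to $S = I_1 \cup I_2$ partitions $V(G)$ into $O(n^d)$ \emph{types} of vertices, where vertices of the same type have identical neighborhoods on $I_1 \cup I_2$ and are hence interchangeable in any maximum independent set. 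Combined with Hajnal's inequality and a careful averaging over types, this should constrain how $I_1$ and $I_2$ can interact and would hopefully rule out dense $G[I_1 \triangle I_2]$.

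An alternative, potentially stronger approach is to pass to the hypergraph $\mathcal H = (V(G),\MIS)$. If one could show that $\mathcal H$ itself has VC-dimension $d' = d'(d)$ bounded in terms of $d$ alone, then since every hyperedge has size $\alpha(G) \ge cn$, the standard $\varepsilon$-net theorem would produce a transversal of $\mathcal H$ of size $O(d'/c \cdot \log(d'/c)) = O_{c,d}(1)$, giving a constant-sized hitting set and strongly resolving \cref{conj:GraphWithBVC}. A third complementary route is to try to mimic Alon's container-based proof for the regular case, using the Haussler packing lemma to cluster the elements of $\MIS$ into $O_d(1)$ families each with large pairwise overlap, and then hitting a common core of the largest family.

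The main obstacle, which I believe is why the conjecture remains open, is that neither route has a clean implementation. For the sparsity route, bounded VC-dimension does not preclude dense bipartite subgraphs of small VC-complexity: the complete bipartite graph $K_{m,m}$ has $\VC = 1$ but $m^2$ edges, and therefore Sauer-Shelah alone cannot force $G[I_1 \triangle I_2]$ to be sparse. For the $\varepsilon$-net route, it is unclear whether $\VC(\mathcal H)$ is bounded in terms of $\VC(G)$: describing a maximum independent set generally requires global information about $G$, and the associated set system could a priori have VC-dimension polynomial in $n$. Overcoming these barriers likely requires a new extremal dichotomy of the form ``in graphs with $\VC(G) \le d$ and $\alpha(G) = \Omega(n)$, either a sublinear hitting set exists or the family $\MIS$ has very restricted structure'', which is essentially the content of the conjecture itself. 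A more tractable first target might be the case $d = 2$ under the additional assumption $\alpha(G) > (\tfrac12 - \varepsilon)n$, where one can combine Hajnal's inequality with the Radon-partition structure available in low VC-dimension.
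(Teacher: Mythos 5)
The statement you were asked to prove is \cref{conj:GraphWithBVC}, which the paper itself poses as an \emph{open conjecture}; the paper contains no proof of it, so there is nothing to compare your proposal against except the surrounding discussion. You correctly recognize that it is open and, rather than fabricating an argument, you honestly map out candidate routes and explain why each one stalls. That assessment is essentially consistent with the paper's treatment: in \cref{sec:BVCBVC} the authors merely state the conjecture, cite the Erd\H{o}s--Hajnal result of Nguyen--Scott--Seymour for bounded VC-dimension as loose motivation, and point out via \cref{thm:NotHitBounded} (Alon's construction) that $h(G)$ cannot be bounded by any function of $\omega(G)$ even when the VC-dimension is at most $3$ and $\omega(G)=3$, so the ``bounded clique number forces bounded hitting set'' heuristic that works for $P_5$-free graphs is unavailable here.

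Your specific diagnoses of the obstacles are sound. The $K_{m,m}$ example (which indeed has VC-dimension exactly $1$) correctly shows that the Sauer--Shelah lemma alone cannot force sparsity of $G[I_1\triangle I_2]$, so the path through \cref{thm:General:Sparse} needs a genuinely new ingredient, not a routine trace-counting bound. Likewise, the worry that the set system $\MIS$ can have VC-dimension unbounded in terms of $\VC(G)$ is exactly the reason an $\varepsilon$-net argument is not immediate. One concrete datum you could have added from the paper itself: \cref{thm:NotHitBounded} gives $h(G)\ge\sqrt{|G|}/2$ for a VC-dimension-$3$ graph, so any affirmative resolution of \cref{conj:GraphWithBVC} must live strictly between $\Omega(\sqrt n)$ and $o(n)$, ruling out any route that would produce a constant- or polylog-size hitting set for all bounded-VC graphs (in particular, the strong form of your $\varepsilon$-net scenario, with a hitting set of size $O_{c,d}(1)$, is already refuted by this construction when $d\ge 3$). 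Your suggested first target — the case $d=2$ with $\alpha(G)>(\tfrac12-\varepsilon)n$ — is a reasonable place to start and mirrors the paper's own remark that even this regime of \cref{conj:BETConj} is open in general.
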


We anticipate that insights from several papers~\cite{2024HitInducedMathcing,2019DCGBVCEH,2021ErdosSchur,2023BVCSunflower,2023BVCErdosHajnal,2023SukMatchingLemma} may prove instrumental in our exploration. Motivated by the recent work of Nguyen, Scott and Seymour~\cite{2023BVCErdosHajnal}, who proved that graphs with bounded VC-dimension have Erd\H{o}s-Hajnal property, one might expect that $h(G)\le \textup{Poly}(\omega(G))$ holds for the graphs with bounded VC-dimension. However, this is not true. Alon's construction in~\cite[Theorem~1.3]{2021AlonHittingSet} showed that there is a graph $G$ with bounded VC-dimension and small clique number, but $h(G)$ cannot be upper bounded by any function of $\omega(G)$.

\begin{proof}[Proof of~\cref{thm:NotHitBounded}]
Let $k$ be a positive integer and $K: = \{1,2,\ldots,2k\}$. The set of vertices of $G$ is the set of all ordered pairs $(i,j)$ with distinct $i,j\in K$. Note that the number of vertices is $n=2k(2k-1)$. A pair of vertices $(a,b)$ and $(c,d)$ are adjacent if $b=c$ or $a=d$. Alon~\cite{2021AlonHittingSet} has already proved that $h(G)\ge\frac{\sqrt{n}}{2}$. It then remains to show $\omega(G)=3$ and VC-dimension is at most $3$. First, it is easy to check that any triangle in $G$ must be of the form $(a,b),(b,c),(c,a)$, where $a,b,c\in K$ are distinct, moreover, it is obvious that these three vertices have no common neighbor. For the VC-dimension of $G$, suppose $S=\{(a_{i},b_{i}):i\in [4] \}$ is a shattered set. Let $(x,y)$ be the vertex such that $N_{G}((x,y))\supseteq N_{G}(S)$. Then for each $1\le i\le 4$, either $a_{i}=y$ or $b_{i}=x$. If for each $1\le i\le 4$, $a_{i}=y$ (or $b_{i}=x$ respectively), then there is no vertex $(z,w)$ such that $|N_{G}((z,w))\cap S|=3$. Therefore, without loss of generality, we can assume there exist three vertices in $S$ of the form $(a_{1},x),(a_{2},x),(y,b_{3})$ (or $(b_{1},y),(b_{2},y),(a_{3},x)$), then it is easy to check that these three vertices have no common neighbor other than $(x,y)$. This finishes the proof. 
\end{proof}

In the theory of online learning, the \emph{Littlestone dimension} (for short, \emph{LS-dimension}) of $\mathcal{F}$ is another key parameter that measures complexity proposed by Littlestone~\cite{1988littlestone}. following the definition in~\cite{2023BVCSunflower}, it is defined as the largest integer $d$ such that there exists a full binary tree $T_{d}$ with $2^d$ leaves at the last level, where the leaves of $T_d$ are labeled by sets in $\mathcal{F}$ and all other vertices by elements of $[n]$. The tree $T_d$ is shattered by $\mathcal{F}$ if for every root-to-leaf path with labels $v_0, v_1, \ldots, v_{d-1}, F$, we have $v_i \in F$ if and only if the $(i+1)$-vertex along the path is the left-child of $v_i$, for all $0 \le i < d$. This parameter is particularly useful in online learning and mistake bounds, providing a combinatorial measure of the complexity of a set system and receiving attentions from different areas~\cite{2021Stoc,2022JACM,2023ITCSC}. By the definitions, we can see that for any given set system $\mathcal{F}$, the LS-dimension of $\mathcal{F}$ is no less than the VC-dimension. We say that the graph $G$ has LS-dimension $d$ if the set system $\mathcal{F} := \{N_{G}(v) : v \in V(G)\}$ induced by the neighborhood of vertices in $G$ has LS-dimension $d$. It would be interesting to establish that graphs with bounded LS-dimension satisfy~\cref{conj:BETConj}. At present, we can only confirm this for graphs with LS-dimension at most $2$, using a detailed structural analysis. We omit the full details here, which is similar as the proof of~\cref{thm:VC1}.

\begin{theorem}\label{thm:LS2}
    Let $G$ be an $n$-vertex graph with $\alpha(G)=\Omega(n)$ and LS-dimension at most $2$, then $h(G)= o(n)$.
\end{theorem}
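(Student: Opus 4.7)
The proof plan closely mirrors that of Theorem~\ref{thm:VC1}, but requires a more elaborate structural analysis to accommodate the richer obstruction posed by depth-$3$ binary-tree shattering. The argument naturally divides into three stages.

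\textbf{Stage 1: Forbidden induced subgraphs.} Since $\VC$-dimension is bounded above by LS-dimension, any graph with LS-dim at most $2$ automatically satisfies $\VC$-dim at most $2$, which in particular rules out induced paths longer than some small constant (mirroring Lemma~\ref{lemma:VC1P5free}). The depth-$3$ shattered-tree definition additionally forbids ``ladder-type'' induced subgraphs, most notably small half-graphs $H_k$ (vertices $a_1,\ldots,a_k,b_1,\ldots,b_k$ with $a_ib_j\in E$ iff $i\le j$). I would carry out a systematic enumeration of all small graphs admitting a shattered depth-$3$ binary decision tree, producing a finite list of minimal forbidden induced subgraphs. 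This list serves as the LS-dimension $2$ analogue of the single obstruction $P_5$ used in the proof of Theorem~\ref{thm:VC1}.

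\textbf{Stage 2: Clique-neighborhood structural lemmas.} Following the template of Lemma~\ref{claim:triangleP5}, I would analyze how small cliques (triangles, and possibly $K_4$) sitting inside $G$ restrict the adjacency pattern of the external vertices. Concretely, I aim to prove that for any triangle $T\subseteq V(G)$, each external vertex's adjacency pattern to $T$ must lie in a short list of admissible types; any deviation would allow the construction of a shattered depth-$3$ binary tree, contradicting the LS-dim assumption. A similar lemma would be proven for $K_4$'s whenever they appear.

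\textbf{Stage 3: Case analysis.} If $G$ is triangle-free, I would combine the forbidden subgraph list from Stage~$1$ with Sumner's theorem (Theorem~\ref{thm:P5K3free}), possibly extended to the next relevant path length, to conclude that each connected component of $G$ takes one of a few simple forms, such as bipartite or a blow-up of a short odd cycle. In each case, Proposition~\ref{obs:Basic} together with Hajnal's inequality yields a constant-size hitting set. If $G$ contains a triangle, the clique-neighborhood lemmas from Stage~$2$ force every pair of maximum independent sets to be (almost) disjoint, from which $h(G)\le O(n/\alpha(G))=O(1)$ follows when $\alpha(G)=\Omega(n)$. In both subcases the resulting bound is in fact $O(1)$, which is strictly stronger than the claimed $o(n)$.

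The principal obstacle is Stage~$1$: LS-dimension $2$ imposes more subtle constraints than VC-dimension $1$, and isolating a clean, finite family of forbidden induced subgraphs requires meticulous case analysis over all small graphs admitting a depth-$3$ shattered tree. Once this list is obtained, the adaptations needed in Stages~$2$ and~$3$ are technical but follow the blueprint of Theorem~\ref{thm:VC1} with a visibly enlarged set of cases. I expect that pushing this strategy to LS-dimension $d\ge 3$ breaks down precisely because the number of forbidden subgraphs grows and Sumner-type decompositions no longer give sufficiently tight structural control, which is why the present theorem is restricted to LS-dim $\le 2$.
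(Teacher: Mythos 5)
Your Stage~1 rests on a false premise. Having VC-dimension at most $2$ does \emph{not} rule out long induced paths: a path $P_n$ has VC-dimension exactly $2$ for every $n\ge 5$, since the pair $\{x_2,x_4\}$ is shattered exactly as in $P_5$, while no $3$-element set can be shattered because every vertex of a path has degree at most $2$. Thus the chain ``LS-dim $\le 2 \Rightarrow$ VC-dim $\le 2 \Rightarrow$ $P_k$-free for some constant $k$'' breaks at its second arrow, and the analogue of Lemma~\ref{lemma:VC1P5free} together with the Sumner-type decomposition of Theorem~\ref{thm:P5K3free} cannot be invoked as you describe. To salvage this you would have to show that the LS-dimension $\le 2$ hypothesis \emph{itself} forbids the relevant path (or other) structures, which is a genuinely different and nontrivial claim that your sketch does not address.

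Beyond that concrete error, the rest of the proposal is a plan rather than a proof: you assert without argument that LS-dimension $\le 2$ is characterized by a \emph{finite} list of minimal forbidden induced subgraphs (unlike VC-dim $1$, where a single obstruction $P_5$ suffices, there is no a priori reason Littlestone dimension admits a finite induced-subgraph characterization), and Stages~2 and~3 are only gestured at. For context, the paper itself omits the proof of this theorem entirely, stating only that it ``is similar as the proof of Theorem~\ref{thm:VC1}'' via a detailed structural analysis, so the two routes cannot be compared line by line; but the incorrect claim about paths being excluded by VC-dim $\le 2$ would need to be repaired under any such route.
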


\subsubsection{String graphs}
In a fundamental sense, curves represent the most versatile geometric objects on the plane, defined as the images of continuous functions $\phi:[0,1]\rightarrow\mathbb{R}^{2}$. A string graph emerges as the intersection graph of such curves. Notice that the majority of graph families examined within this paper align with the concept of string graphs. Consequently, the following conjecture assumes significant relevance.

\begin{conj}\label{conj:StringGraph}
Let $G$ be an $n$-vertex string graph.
\begin{enumerate}
    \item[\textup{(1)}] If $\alpha(G)=\Omega(n)$, then $h(G)=o(n)$.
    \item[\textup{(2)}] There exists some constant $s>0$ such that $h(G)\le \omega(G)^{s}$.
\end{enumerate}

\end{conj}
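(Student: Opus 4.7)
\medskip
The plan is to attack the two parts with rather different tools: part~(1) by extending the locally sparse framework of~\cref{thm:General:Sparse}, and part~(2) by combining $\chi$-boundedness of string graphs with the separator-based iteration of~\cref{thm:SmallSeparatorSmallHitting}. In both cases the hard regime is $\omega(G)$ small and $G$ dense (e.g.\ $K_{n,n}$ is a string graph with $\omega=2$), so any pure density bound on $G[I_{1}\triangle I_{2}]$ alone cannot suffice, and the maximum-independent-set hypothesis must enter in an essential way.

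For part~(1), the natural attempt is to verify the hypothesis of~\cref{thm:General:Sparse}: show that for any $I_{1},I_{2}\in\MIS$ the bipartite string graph $B:=G[I_{1}\triangle I_{2}]$ satisfies $e(B)\le s\cdot|I_{1}\triangle I_{2}|$ for some constant $s=s(c)$. By the Fox--Pach-type linear bound on $K_{t,t}$-free bipartite string graphs, it then suffices to forbid a large balanced biclique inside $I_{1}\triangle I_{2}$. I would try to achieve this through a double-extremal choice: among all pairs $(I_{1},I_{2})\in\MIS\times\MIS$ with $|I_{1}\triangle I_{2}|$ as large as possible, pick one minimising $e(B)$; if a large $K_{t,t}$ appears between $I_{1}\setminus I_{2}$ and $I_{2}\setminus I_{1}$, then a partial side-swap across the biclique should either produce a fresh maximum independent set contradicting the extremal choice, or expose many common external neighbours that conflict with a planar-curve realisation. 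Even a polylogarithmic weakening $e(B)\le |I_{1}\triangle I_{2}|\cdot\textup{polylog}(n)$ would still deliver $h(G)=o(n)$ after a mild modification of the proof of~\cref{thm:General:Sparse}.

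For part~(2), the plan is to combine a separator theorem of Lee --- every $K_{k}$-free string graph on $N$ vertices admits a balanced separator of size $O_{k}(\sqrt{N})$ --- with the polynomial $\chi$-boundedness $\chi(G)\le\textup{poly}(\omega(G))$. Plugging the separator bound into the recursion behind~\cref{thm:SmallSeparatorSmallHitting} already gives hitting sets of size $O(\textup{poly}(\omega)\sqrt{N})$ on any $N$-vertex induced subgraph, but the goal is a bound independent of $N$. I would therefore stop the recursion as soon as a subcomponent $G'$ has $\alpha(G')\le\textup{poly}(\omega(G))$: $\chi$-boundedness then forces $|V(G')|\le\alpha(G')\cdot\textup{poly}(\omega(G))=\textup{poly}(\omega(G))$, and the whole of $V(G')$ can be added to the hitting set at constant cost. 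The hard part will be to control the recursion tree so that the total size of all separators plus all wholesale leaves still adds up to $\omega(G)^{s}$; I expect the decisive missing ingredient to be a Ramsey-type lemma for maximum independent sets in string graphs, playing the role that~\cref{claim:SmallestCircle} plays in the disk-graph proof of~\cref{thm:Disk}.
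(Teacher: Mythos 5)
The statement you are addressing is not proven in the paper at all: it appears as \cref{conj:StringGraph}, an open conjecture, and the paper offers only motivating pointers (the Fox--Pach theorem that dense string graphs contain dense incomparability subgraphs, and Tomon's proof of the Erd\H{o}s--Hajnal property for string graphs). So there is no paper proof to compare against, and what you have written is, as you acknowledge, a plan. It also contains gaps that would need to be repaired rather than merely filled in.

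For part~(1), your own first paragraph already identifies the obstruction: $K_{n,n}$ is a string graph with $\omega=2$, $\alpha=n$, and, taking $I_1,I_2$ to be the two sides, $e(G[I_1\triangle I_2])=n^2/4$ while $|I_1\triangle I_2|=2n$. Hence no bound of the form $e(G[I_1\triangle I_2])\le|I_1\triangle I_2|\cdot\textup{polylog}(n)$ can hold for string graphs, so the hypothesis of \cref{thm:General:Sparse} cannot be established even in the weakened form you suggest. Your proposed repair --- a doubly extremal choice of $(I_1,I_2)$ followed by a ``partial side-swap across the biclique'' --- does not engage with this example: in $K_{n,n}$ there is only one pair of maximum independent sets and no partial swap produces an independent set, so the argument stalls at step one. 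Any route through \cref{thm:General:Sparse} must be restructured from scratch, not lightly modified; the paper's own hint toward Fox--Pach incomparability subgraphs is probably a more realistic starting point.

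For part~(2), the plan rests on a false premise. String graphs are \emph{not} $\chi$-bounded: Pawlik, Kozik, Krawczyk, Laso\'{n}, Micek, Trotter and Walczak (J.\ Combin.\ Theory Ser.~B 105 (2014), 6--10) constructed triangle-free segment intersection graphs --- hence triangle-free string graphs --- with arbitrarily large chromatic number. Consequently there is no bound $\chi(G)\le\textup{poly}(\omega(G))$ for string graphs, and the step where you halt the separator recursion at a component $G'$ with $\alpha(G')\le\textup{poly}(\omega)$ and deduce $|V(G')|\le\alpha(G')\cdot\textup{poly}(\omega)$ simply fails. Without a halting rule the recursion must run all the way down and the accumulated separator size grows with $n$, so no $n$-independent bound $h(G)\le\omega(G)^{s}$ emerges. (As a secondary point, the separator theorem of Lee is stated for $m$-edge string graphs with bound $O(\sqrt{m})$; for $K_k$-free string graphs the edge count is not $O_k(N)$, so even the quoted $O_k(\sqrt{N})$ separator bound would need a citation.) Since this contradicts a known theorem rather than being an omitted lemma, this half of the plan must be replaced entirely.
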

 The significant finding by Fox and Pach~\cite{2012AdvString} might be helpful, which asserts that any dense string graph must contain a dense incomparability graph as a spanning subgraph. \cref{conj:StringGraph}(2) is also motivated by a recent breakthrough by Tomon~\cite{2024JEMSStringEHConj}, who demonstrated that string graphs have the Erd\H{o}s-Hajnal property (also see a more flexible result~\cite{2023EUJC}). Building on the relationship discovered in~\cite{2023P5Free}, a proof of \cref{conj:StringGraph}(2) would offer a novel proof of Tomon's theorem and possibly lead to potential quantitative improvements.

We are also interested in the family of interval filament graphs introduced by Gavril~\cite{2000IntervalFilament}. A graph is an \Yemph{interval filament graph} if it is the intersection graph of continuous non-negative functions defined on closed intervals with zero-values on their endpoints. One of the motivations is that the family of interval filament graphs is a common generalization of the incomparability graphs, the circle graphs and the chordal graphs. We can also see that any interval filament graph $G$ with is a $2$-incomparable graph, but might be not $K$-intersecting for $K=o(n)$. 

\begin{conj}\label{conj:IntervalFl}
    Let $G$ be an $n$-vertex interval filament graph with $\alpha(G)=\Omega(n)$, then $h(G)=o(n)$.
\end{conj}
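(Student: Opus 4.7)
The plan is to extend the $2$-incomparable framework of~\cref{thm:2-incomparable K-intersecting} to interval filament graphs. Any interval filament graph $G$ naturally carries two partial orders: $a \prec b$ when the underlying intervals of the filaments are disjoint with $a$'s interval to the left, and $a \subseteq b$ when $a$'s interval is strictly contained in $b$'s interval while the two filaments do not cross. The axioms of~\cref{def:incomparable} are easy to verify, so $(G, \subseteq, \prec)$ is $2$-incomparable. The obstruction to applying~\cref{thm:2-incomparable K-intersecting} directly is that the $K$-intersecting condition breaks: a filament stretched over a long interval can oscillate enough to cross every member of an arbitrarily long $\prec$-chain of short filaments nested beneath it, so no $K(n) = o(n)$ is available in general.

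My first step would be to try the locally sparse framework of~\cref{thm:General:Sparse}, reducing the problem to showing that $e(G[I_1 \triangle I_2]) = O(|I_1 \triangle I_2|)$ for any two maximum independent sets $I_1, I_2 \in \MIS$. On each side, the filaments in $I_i$ are pairwise non-crossing, so their intervals form a laminar family with respect to $\subseteq$, and the $\prec$-incomparable pairs sit inside a natural tree structure. Using an exchange argument in the spirit of the well-definedness proof of the coloring $C$ in~\cref{thm:2-incomparable K-intersecting}, one should try to show that a filament $v \in I_1 \setminus I_2$ with too many neighbors in $I_2 \setminus I_1$ admits a swap of a nested subfamily of $I_1$ for a nested subfamily of $I_2$ that strictly increases the size of an independent set, a contradiction to maximality.

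A parallel line of attack is to redesign the core coloring of~\cref{thm:2-incomparable K-intersecting} so that a weaker intersecting condition suffices. Instead of letting $M_I$ consist of all $\subseteq$-minimal elements of $I$, one would pick a $\subseteq$-antichain inside $I$ that is also well-separated in $\prec$ (e.g.\ alternating elements along the line, or only those filaments whose interval is at most half the length of their $\subseteq$-parent). Any filament $x \notin I$ meeting many elements of such a refined $M_I$ must oscillate across many pairwise $\prec$-separated intervals; coupled with the non-crossing structure within $I$, this should force either a local exchange that exceeds $\alpha(G)$ or an explicit combinatorial obstruction, yielding a sublinear bound on $|N(x) \cap M_I|$ as required by the analogue of~\cref{claim: small number of slaves}.

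The main obstacle I anticipate is the absence of any bounded combinatorial complexity for the filaments themselves: unlike chords or disks, the continuous functions defining filaments admit no intrinsic complexity bound, so neither the packing argument of~\cref{claim:SmallestCircle} nor the nesting argument used for circle graphs transfers cleanly. To make progress, I would first attempt the conjecture for $k$-piece piecewise-linear interval filaments and see whether the resulting bound depends on $k$ only mildly (say logarithmically), so that it extends to all filaments by approximation. As an alternative, I would invoke Fox--Pach-type structural theorems for dense string graphs to try to reduce the problem to the case in which $G$ contains a large dense incomparability subgraph, where~\cref{thm:2-incomparable K-intersecting} already applies with $K = 2$ and delivers the desired $o(n)$ bound.
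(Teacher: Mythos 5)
This statement is \emph{not} a theorem in the paper: it is one of the open problems listed in~\cref{sec:ConCludingRemarks}, and the paper supplies no proof of it. Your submission is likewise not a proof but a research plan, so strictly speaking there is nothing to compare. The one concrete observation you do make --- that $(G,\subseteq,\prec)$ is $2$-incomparable for any interval filament graph, but the $K$-intersecting hypothesis of~\cref{thm:2-incomparable K-intersecting} cannot be guaranteed with $K(n)=o(n)$ because a single filament can oscillate across an arbitrarily long $\prec$-chain --- is exactly the remark the paper itself makes immediately before stating~\cref{conj:IntervalFl}. That observation is correct, but it motivates the conjecture rather than resolving it.

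Each of your four proposed lines of attack is left as a sketch, and the first one is unlikely to succeed as stated. The hypothesis of~\cref{thm:General:Sparse} asks for $e(G[I_1\triangle I_2])=O(|I_1\triangle I_2|)$ uniformly over pairs of maximum independent sets, but for interval filament graphs this can fail badly: take $I_1$ to be a laminar chain of $m$ nested non-crossing filaments over intervals $[0,1]\supset[0,1-\varepsilon]\supset\cdots$, and $I_2$ to be $m$ pairwise $\prec$-comparable short filaments packed under $[0,1]$. Each filament in $I_1$ can be drawn to dip below and cross \emph{every} filament in $I_2$, so $e(G[I_1\triangle I_2])$ can be $\Theta(m^2)$ while $|I_1\triangle I_2|=2m$. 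Without a structural reason ruling such configurations out among maximum independent sets specifically, the locally sparse framework does not apply. The redesigned coloring, the piecewise-linear approximation, and the Fox--Pach reduction are floated without being carried out: in particular the Fox--Pach theorem produces a dense incomparability \emph{subgraph} of a dense string graph, which does not by itself transfer a hitting-set bound from the subgraph back to $G$, since maximum independent sets of $G$ need not be maximum independent sets of the subgraph. In short, no gap in the paper is being filled here; the conjecture remains open, and your proposal identifies, correctly, the same obstruction the authors flag, without overcoming it.
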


\subsubsection{Optimal construction}
We believe that~\cref{conj:BETConj} is true, but currently we are unable to prove it. Conversely, we conjecture that the $o(n)$ term in the conjecture is optimal, suggesting that the following conjecture may be valid.

\begin{conj}
For any positive constant $\varepsilon>0$, there exists an $n$-vertex graph $G$ with $\alpha(G)=cn$ for some constant $c>0$ and $h(G)\ge n^{1-\varepsilon}$.
\end{conj}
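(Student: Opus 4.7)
The plan is to exhibit, for every $\varepsilon>0$, an $n$-vertex graph $G$ with $\alpha(G)\ge cn$ and $h(G)\ge n^{1-\varepsilon}$. Alon's construction giving $h(G)=\Theta(\sqrt{n})$ with $\alpha(G)=\Theta(n)$ already settles the conjecture for $\varepsilon\ge 1/2$, so the task is to boost the hitting-set exponent from $1/2$ toward $1$ while preserving linear independence.

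The first step is to test amplification via graph products. A short transfer argument shows that the lexicographic product satisfies $h(G[H])=h(G)\cdot h(H)$, $\alpha(G[H])=\alpha(G)\alpha(H)$, and $|V(G[H])|=|V(G)|\cdot|V(H)|$, so iterating Alon's base gadget preserves but does not improve the exponent $1/2$. Independent-set blow-ups actually decrease the exponent; replacing the inner factor by a clique $K_{k}$ pushes $h$ up to $k\sqrt{n_{0}}$ but simultaneously destroys the linear independence ratio $\alpha/n$. The first technical task is therefore to design a hybrid construction, for instance an iterated composition of Alon gadgets interleaved with carefully calibrated clique blow-ups and independent-set blow-ups at matched scales, that strictly increases the $h$-exponent at each step while keeping $\alpha/n$ bounded below.

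A second, parallel approach is probabilistic. Take $G$ to be a random $d$-regular graph, a random Cayley graph on $\mathbb{Z}_{n}$, or a pseudo-random algebraic graph (Paley, bilinear-forms, norm graphs). Standard first- and second-moment arguments give $\alpha(G)\ge cn$ with high probability. The harder direction is a lower bound on $h(G)$: one must show that for every subset $T\subseteq V(G)$ of size at most $n^{1-\varepsilon}$, some $I\in\MIS$ is disjoint from $T$. A union bound over the $\binom{n}{n^{1-\varepsilon}}\le \exp(n^{1-\varepsilon}\log n)$ choices of $T$ requires $\Pr[\alpha(G-T)<\alpha(G)]\le \exp(-\omega(n^{1-\varepsilon}\log n))$ for each fixed $T$, which in turn demands that the number of maximum independent sets in $G$ be doubly exponential and arranged in ``spread'' position so that no small $T$ can be universally covering.

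The central obstacle is precisely this last step. Typical random graph models have only polynomially many maximum independent sets and concentrate too strongly, so $h$ ends up small there; purely algebraic or design-based constructions (Steiner systems, projective planes) either force $\alpha$ to be too small or produce too few maximum independent sets. A promising line is to engineer a large orbit of maximum independent sets under a transitive group action together with an expansion/anti-concentration property forcing any small $T$ to miss at least one orbit element. Techniques from covering-code theory, the sunflower lemma, and resolution lower bounds in proof complexity may supply the needed ingredients, but resolving the conjecture seems to require a genuinely new construction beyond Alon's framework.
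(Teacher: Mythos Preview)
The statement you are attempting to prove is not a theorem in the paper; it is an open conjecture, stated in the concluding remarks with the preamble ``we conjecture that the following conjecture may be valid.'' The paper offers no proof and no construction beyond the reference to Alon's $\Theta(\sqrt{n})$ example.

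Your proposal is likewise not a proof. You correctly note that Alon's construction handles $\varepsilon\ge 1/2$, but for smaller $\varepsilon$ you only survey candidate approaches (lexicographic products, hybrid blow-ups, random and algebraic models) and in each case identify why the approach fails or what is missing. Your final paragraph explicitly concedes that ``resolving the conjecture seems to require a genuinely new construction beyond Alon's framework.'' That is an honest assessment of the state of the problem, but it means the proposal contains no argument establishing the conjecture for any $\varepsilon<1/2$.

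One technical remark: the identity $h(G[H])=h(G)\cdot h(H)$ for the lexicographic product is not obviously an equality. The upper bound $h(G[H])\le h(G)\,h(H)$ follows from the product of hitting sets, and that is all you need to conclude that iteration does not improve the exponent; the reverse inequality would require a separate argument. This does not affect your overall conclusion, but you should not assert equality without justification.

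In summary, there is nothing to compare: the paper leaves the statement open, and your proposal does too.
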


\section*{Acknowledgement}
Zixiang Xu would like to express sincere gratitude to Prof. Hong Liu for his extensive suggestions on the writing of this draft. The authors also thank Prof. Noga Alon for helpful comments.

\bibliographystyle{abbrv}
\bibliography{BETConj}

\end{document}